\newcommand{\commentout}[1]{}
\let\C=\undefined
\newcommand{\C}{\mathbb{C}}
\newcommand{\Q}{\mathbb{Q}}
\newcommand{\Z}{\mathbb{Z}}
\newcommand{\cC}{\mathcal{C}}
\newcommand{\calO}{\mathcal{O}}
\DeclareSymbolFont{cyrletters}{OT2}{wncyr}{m}{n}
\DeclareMathSymbol{\sha}{\mathalpha}{cyrletters}{"58}
\newcommand{\eps}{\varepsilon}
\newlength{\strutheight}
\newtheorem{theorem}{Theorem}[section]
\newtheorem{lemma}[theorem]{Lemma}
\newtheorem{corollary}[theorem]{Corollary}
\newtheorem{conjecture}[theorem]{Conjecture}
\newtheorem{proposition}[theorem]{Proposition}
\newtheorem{heuristic}[theorem]{Heuristic}
\newtheorem{question}[theorem]{Question}
\theoremstyle{definition}
\newtheorem{remark}[theorem]{Remark}
\newtheorem{figurecap}[theorem]{Figure}
\newtheorem{tablecap}[theorem]{Table}
\title{Counting modular forms by rationality field}
\author{Alex Cowan}
\address{Department of Mathematics, Harvard University, Cambridge, MA 02138 USA}
\email{cowan@math.harvard.edu}
\thanks{AC was supported by the Simons Foundation (Collaboration Grant 550031).}
\author{Kimball Martin}
\address{Department of Mathematics, University of Oklahoma, Norman, OK 73019 USA}
\email{kimball.martin@ou.edu}
\thanks{KM was supported by the Simons Foundation (Collaboration Grant 512927), the Japan Society for the Promotion of Science (Invitational Fellowship L22540), and the Osaka Central Advanced Mathematical Institute (MEXT Joint Usage/Research Center on Mathematics and Theoretical Physics JPMXP0619217849).}
\date{\today}
\begin{document}
\maketitle
\begin{abstract}
We investigate the distribution of degrees and rationality fields of weight 2 newforms.  In particular, we give heuristic upper bounds on how often degree $d$ rationality fields occur for squarefree levels, and predict finiteness if $d \ge 7$.  
When $d=2$, we make predictions about how frequently specific quadratic fields occur, prove lower bounds, and conjecture that $\Q(\sqrt 5)$ is the most common quadratic rationality field.  
\end{abstract}

\section{Introduction}

Let $S_k(N) = S_k(\Gamma_0(N))$ be the space of holomorphic cusp forms of 
weight $k$ and level $N$ with trivial nebentypus. 
For a newform $f \in S_k(N)$, denote by $K_f$ its rationality field, i.e.\ the number field generated by its Hecke eigenvalues. Define the (\textit{rationality}) \textit{degree} and \textit{discriminant} of $f$ to be the degree and discriminant of $K_f/\Q$ respectively.  In what follows, we always assume trivial nebentypus.

Weight 2 newforms are of special interest as they correspond to modular
abelian varieties, i.e., simple factors of the Jacobian $J_0(N)$ of $X_0(N)$.
Namely (the Galois orbit of) a degree $d$ newform $f \in S_2(N)$ corresponds
to a $d$-dimensional simple abelian subvariety
of $J_0(N)$ which has conductor $N^d$.  
The $d=1$ case is the celebrated bijection between
rational newforms in $S_2(N)$ and isogeny classes of elliptic curves of conductor
$N$.

It is expected (e.g., see \cite{serre, lipnowski-schaeffer, me:maeda})
that the Galois orbit of a newform is ``as large as possible'' 100\% of the time,
so that newforms have small degree rather infrequently.  On the other hand,
there are a relatively large number of elliptic curves of small conductor.
Watkins' \cite{watkins} refinement of the Brumer--McGuinness heuristics \cite{bm}
for counting elliptic curves suggests that the number of weight 2
rational newforms with level $N < X$ grows like $c X^{5/6}$ for some 
computable constant $c$.  See \cite{dk, SSW} for some theoretical results
towards this growth rate.
Note that the total number of weight 2 newforms
of level $N < X$ grows roughly like $X^2$.

Here we consider the questions: how many weight 2 newforms of level
$N < X$ are there with a given degree $d$ or
a given rationality field $K$?  There is no analogue of the 
Brumer--McGuinness heuristics for $d > 1$, since those rely on having
simple equations for elliptic curves.  Moreover, as degree $d > 1$ forms
are relatively rare, it is difficult to generate enough data to predict
precise asymptotics based on calculations.  

In fact, even for $d=1$, it is
difficult to make accurate predictions based solely on computations.  E.g., 
as remarked in \cite{watkins}, the growth rate in 
Cremona's database of elliptic curves is about $X^{0.98}$; however
more recent and very extensive calculations for prime conductors in \cite{bgr} 
align closely with the $X^{5/6}$ heuristic.

Using a combination of heuristics and data, we predict
some bounds on asymptotic orders of growth, and the relative frequency
of such forms.  

\begin{conjecture} \label{conj1}
Let $\eps > 0$.
The number of degree $d$ weight $2$ newforms
of squarefree level $N \le X$ is $O(X^{1-d/6+\eps})$ as $X \to \infty$.
In particular, this number is finite if $d \ge 7$.
\end{conjecture}

\begin{conjecture} \label{conj2}
Among squarefree levels $N \to \infty$, 100\% of degree $2$ newforms in $S_2(N)$ have rationality field $\Q(\sqrt 5)$.
\end{conjecture}

\begin{remark}
The heuristics for these conjectures do not require a restriction to
squarefree levels, however there are special considerations for
non-squarefree levels.  First, one should only count quadratic twist classes
for a more general analogue of \cref{conj1}.
Second, CM forms (which do not occur in squarefree level with trivial
nebentypus) deserve separate consideration.  
Third, if $p^r \mid N$ for sufficiently large $r$,
 then the rationality field of a newform $f$ of level $N$ must contain a
certain cyclotomic subfield (e.g., if $p \ge 5$ and $r \ge 3$, then 
$K_f \supset \Q(\zeta_p)^+$)---see \cite{brumer:rank, me:conductor}.

It is at least plausible that \cref{conj2} holds for general levels, and
\cref{conj1} holds for general levels if one restricts to counting non-CM
newforms up to quadratic twist.  However, our data are much more
limited for non-squarefree levels.
\end{remark}

\cref{conj1} is just a conjectural upper bound,
and it may not be sharp for $2 \le d \le 5$ (see below for more discussion).
When $d=1$, one can prove a lower bound of order $X^{5/6}$ for elliptic curves, but we are not aware of nontrivial
analogous lower bounds (or even a proof of infinitude!) for any $d > 1$.  
Using constructions of genus 2 curves with real multiplication, 
we obtain the following lower bounds for $d=2$, without a restriction to squarefree level.

\begin{proposition} \label{prop:intro}
The number of quadratic twist classes of weight $2$ newforms with rationality field $\Q(\sqrt 5)$ (resp.\ $\Q(\sqrt 2)$) and minimal level $N < X$ is $\gg X^{1/3}$ (resp.\ $\gg X^{2/7}$).
\end{proposition}  

The same result for squarefree levels would follow if one knew certain polynomials took on squarefree values
sufficiently often.

\begin{remark} It is not even clear for which $d \le 6$ there should exist infinitely many weight 2 newforms of squarefree level.  Constructions of genus 3 curves with real multiplication suggest it may be infinite for $d=3$---see
\cref{sec:highdeg}.  For $d = 4, 5, 6$, we have little theoretical evidence,
but our data suggest these counts are infinite at least for each $d \le 4$.
\end{remark}

We will consider two approaches to predicting counts of newforms with fixed degree or rationality field.  First, in \cref{sec:degree}, we present a heuristic using
a random model for Hecke polynomials, building off of \cite{roberts,me:maeda}.
In fact this random model naively suggests 
upper and lower bounds for counts of degree $d$ forms
on the order of $X^{1-d/6 \pm \eps}$.  However, 
it ignores any geometric considerations for the existence of degree $d$ forms,
so it is unclear how accurate this heuristic is.  Nevertheless, comparing these predictions with data at least suggests it gives an upper bound,  as asserted in \cref{conj1}.

In \cref{sec:ratfld}, we suggest an approach to predict counts of weight 2 
newforms with a given rationality field $K$ by counting moduli points for suitable abelian varieties.  In principle, this would also yield the number of counts of newforms of a fixed degree $d$, and we expect this approach should give more accurate predictions than the random Hecke polynomial model.  However, it requires more knowledge about the moduli spaces and the relation between heights and conductors than we currently possess.  We carry out some of this analysis when $d=2$, namely when $K = \Q(\sqrt D)$ for $D = 5, 8, 12, 13, 17$.
This leads to \cref{conj2}, and also suggests $cX^{3/5 - \eps}$ may be a lower bound for the total count for degree 2 newforms.
However, our analysis is not definitive enough to confidently conjecture this.

A database of all prime-level forms of degree $6$ or less and level $2\cdot 10^6$ or less was computed using an algorithm of the first author \cite{cowan}. In \cref{sec:data}, we use this database to investigate \cref{conj1} and \cref{conj2}, and pose some related questions.

\subsection*{Acknowledgements}
We are especially grateful to Noam Elkies for many insights and suggestions.  We have also benefited from conversations with Eran Assaf, Armand Brumer, Bjorn Poonen, Ari Shnidman, and John Voight.
Computations were performed at the OU Supercomputing Center for Education \& Research (OSCER) at the University of Oklahoma (OU).

\section{Counts by degree} \label{sec:degree}

First we discuss counting newforms of fixed degree. For a newform $f$,
let $\deg f = [K_f: \Q]$ be its rationality degree.  Set 
\begin{align*}
  \cC_d(X) = \#\{\text{newforms } f \in S_2(N) : \, N<X,\, N \text{ squarefree},\,  \deg f = d\}.
\end{align*}
As explained in the introduction, we restrict to squarefree $N$ for simplicity,
though our initial discussion applies equally well to counting quadratic
twist classes of non-CM weight 2 newforms.

Watkins \cite{watkins}, building on heuristics of Brumer and McGuinness \cite{bm}, formulates heuristics that suggest
\begin{align}\label{bmw_conj}
  \cC_1(X) \sim c_1 X^{\frac{5}{6}}
\end{align}
for some computable constant $c_1$.
It is known that $X^{\frac 56} \ll \cC_1(X) \ll X^{1+\eps}$ \cite{dk}.
Furthermore, Shankar--Shankar--Wang \cite{SSW} show a growth rate of
$b_1 X^{\frac 56}$ if one restricts to elliptic curves of squarefree conductor
coprime to 6 with some restrictions on discriminant-conductor and discriminant-height ratios.

While the exponent $\frac 56$ is not in clear agreement with databases of
elliptic curves in general levels (the Cremona \cite{cremona:git} and Stein--Watkins \cite{sw} databases),
the compatibility with prime-level data is much better. 
Namely, Watkins' heuristic suggests a growth rate of 
$c_1' \textrm{li}(X^{\frac 56})$ for prime levels, and this fits
extremely well with the extensive database of 
elliptic curves of prime conductor in \cite{bgr}.
Thus there is much evidence towards \eqref{bmw_conj}.

For $d > 1$, the situation is much more mysterious. In \cite{serre}, Serre proves
a statement which strongly suggests, though does not quite imply,
the bound $\cC_d(X) = o(X^2)$. 
Namely, if $N \to \infty$ along a sequence which is coprime to a fixed prime $\ell$, among bases of eigenforms for $S_2(N)$, Serre proves that the number of forms of degree $d$ is $o(\dim(S_2(N)))$ as $N \to \infty$. 
Serre's theorem was made effective by Murty and Sinha \cite{ms}, 
and more recently by Sarnak and Zubrilina \cite{sz}.

Since we do not know a good way to predict precise asymptotics for $\cC_d(X)$,
we aim to predict weaker estimates of the form
\begin{align}\label{bound_eq}
  X^{\alpha_d} \ll \cC_d(X) \ll X^{\beta_d}
\end{align}
which are nontrivial, i.e., $\alpha_d > 0$ or $\beta_d < 2$.
Computations of modular forms, as well as heuristics in
\cite{me:maeda}, suggest $\beta_d$ is decreasing in $d$, and thus we
should at least be able to take $\beta_d \le \frac 56$ for each $d \ge 1$.  In 
\cite[Question 3.1]{me:maeda}, it was also suggested that one may have
$\beta_d = 0$ for $d \gg 0$.

To our knowledge, \cref{conj1} is the first prediction of 
more precise upper bounds
(for either squarefree or general levels).  In particular, it predicts that
one can take $\beta_d = 0$ for $d \ge 7$, 
and $\beta_d$ arbitrarily small for $d=6$.  However, we do not have 
insight into whether the upper bounds in \cref{conj1} should be sharp for
$2 \le d \le 5$.

Note that \cref{conj1} implies that
$\alpha_d = 0$ is optimal among lower bounds of the form \eqref{bound_eq}
for $d \ge 6$.    In addition, \cref{prop:intro} suggests that one
may take $\alpha_2 \ge \frac 13$ for $d=2$.  (Note that \cref{prop:intro}
does not prove a lower bound for squarefree levels, only for general levels.)
This lower bound is almost certainly not sharp.  We do not have any 
predictions for lower bounds when $3 \le d \le 5$.

\subsection{Random Hecke polynomial model}
\label{sec:randhecke}

Here we present a random model to estimate the distribution of degree $d$ newforms that will lead us to \cref{conj1}.  This is based on ideas for heuristics suggested in \cite{roberts} and \cite{me:maeda}.

Consider a newspace $S_{2k}^{\mathrm{new}}(N)$.  One can further decompose this space into $2^{\omega(N)}$ joint eigenspaces of the Atkin--Lehner operators $W_p$ for $p \mid N$, which we call the Atkin--Lehner eigenspaces.  Each Atkin--Lehner eigenspace is Galois invariant.  For non-squarefree levels, one can further decompose each Atkin--Lehner eigenspace into smaller Galois invariant subspaces according to local inertia types of non-CM forms (see \cite{DPT}) and the subspace of CM forms.

For simplicity, assume $N$ is squarefree.  Then there are no CM forms of trivial nebentypus and there is only one local inertial type.  Let $S$ be an Atkin--Lehner eigenspace in $S_{2k}^{\mathrm{new}}(N)$. 
For a newform $f \in S$, the single Fourier coefficient $a_p(f)$ generates $K_f$ for 100\% of $p$ \cite{KSW}, and it is conjectured that this is true for all but finitely many $p$ if $[K_f : \Q] > 4$ \cite{murty}.
Hence, for fixed $p \nmid N$, the factorization type of the characteristic polynomial $c_{T_p}(x) \in \Z[x]$ of the Hecke operator $T_p$ will usually tell us the degrees of the newforms in $S$.  In fact, it will always give us lower bounds.

Let $n = \dim S$.  As in \cite{roberts} and \cite{me:maeda}, we can model $c_{T_p}(x)$ as a random polynomial in the set $H_n = H_n(k,p)$ of degree $n$ monic integral polynomials whose roots $\alpha$ satisfy $|\alpha| \le 2p^{k-1/2}$.  Alternatively, one can consider the set of Weil $q$-polynomials of degree $2n$ where $q = p^{k}$, or the isogeny classes of $n$-dimensional abelian varieties over $\mathbb F_{p^k}$.


Set $h(n) = \# H_n$. As discussed in \cite[\S 2.1]{me:maeda}, the number of polynomials in $H_n$ with a degree $d < \tfrac{n}{2}$ factor is approximately $h(d)h(n-d)$. Thus, if we select polynomials in $H_n$ uniformly at random, then
\begin{align}\label{eq:prob}
  \mathrm{Prob}\!\left(p \in H_n \text{ has a degree $d$ factor}\right) \approx \frac{h(d)h(n-d)}{h(n)}.
\end{align}
In this section, by approximately ($\approx$), we mean that for fixed $d$ both sides have the same growth rate in $n$ as $n \to \infty$.

For fixed $q$, no good asymptotics are known for $h(n)$ to directly estimate this probability.  There is an asymptotic for $h(n)$ when $n$ is fixed and $q$ varies.  Instead, \cite{roberts} and \cite[\S 2.1]{me:maeda} analyzed how this probability behaves if one uses a known asymptotic for $\#H_n(k,p)$ in $p^k$ when $n$ is fixed. (The result is certainly too small, as it would predict only finitely many degree 1 forms.)  

To circumvent this lack of precise asymptotics for $h(n)$ as $n \to \infty$,
we rewrite the right hand side of \eqref{eq:prob} as
\begin{align*}
  \frac{h(d)h(n-d)}{h(n)} &= h(d)\frac{h(n-d)}{h(n-d+1)} \frac{h(n-d+1)}{h(n-d+2)}  \cdots  \frac{h(n-1)}{h(n)}.
\end{align*}
One should have that $\frac{h(n-2)}{h(n-1)} \approx
\frac{h(n-1)}{h(n)}$, so applying this a small fixed number of times for a given $d$ yields
\begin{equation} \label{eq:probpower}
  \frac{h(d)h(n-d)}{h(n)} \approx \left(\frac{h(n-1)}{h(n)}\right)^d.
\end{equation}

Combining \eqref{eq:prob} and \eqref{eq:probpower} suggests that the probability of a degree $d$ factor of $c_{T_p}$ should approximately
be the $d$-th power of the probability of a degree $1$ factor of $c_{T_p}$.
The latter typically corresponds to a degree 1 form, and so we can model it using well-known expectations about counts of elliptic curves.

We will also use the following lemma.

\begin{lemma}\label{lemma:AL_dims}
Let $\nu_{2k}(X)$ be the number of Atkin--Lehner eigenspaces 
in $\bigcup_N S_{2k}^{\mathrm{new}}(N)$, where $N$ ranges over squarefree levels, having dimension less than $X$.  Then $X \ll \nu_{2k}(X) \ll X^{1+\eps}$, for any $\eps > 0$.
\end{lemma}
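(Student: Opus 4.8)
The plan is to estimate, for each squarefree $N$, the dimension of every Atkin--Lehner eigenspace in $S_{2k}^{\mathrm{new}}(N)$ via the trace formula, and then sum over $N$. Write $D(N) = \dim S_{2k}^{\mathrm{new}}(N)$ and index the eigenspaces by sign vectors $w \in \{\pm 1\}^{\omega(N)}$, writing $S(N,w)$ for the corresponding eigenspace. By orthogonality of the characters of $(\Z/2)^{\omega(N)}$,
\[ \dim S(N,w) = \frac{1}{2^{\omega(N)}}\sum_{Q \| N} w(Q)\,\mathrm{tr}\bigl(W_Q \mid S_{2k}^{\mathrm{new}}(N)\bigr), \]
where $Q$ runs over the $2^{\omega(N)}$ unitary divisors of $N$ and $w(Q) = \prod_{p \mid Q} w_p$. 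The term $Q=1$ gives the main term $D(N)/2^{\omega(N)}$. For $Q>1$ the Eichler--Selberg trace formula expresses $\mathrm{tr}(W_Q \mid S_{2k}^{\mathrm{new}}(N))$ as a sum of class numbers, whence $\mathrm{tr}(W_Q \mid S_{2k}^{\mathrm{new}}(N)) \ll_{k,\eps} N^{1/2+\eps}$ (see \cite{me:refdim}).

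Since $N$ is squarefree, $D(N) \sim \tfrac{2k-1}{12}\varphi(N)$ with $\varphi(N) = \prod_{p\mid N}(p-1) \gg N^{1-\eps}$, while $2^{\omega(N)} \le d(N) \ll N^{\eps}$. Hence the main term satisfies $D(N)/2^{\omega(N)} \gg N^{1-\eps}$ and dominates the $O(N^{1/2+\eps})$ contribution of the $Q>1$ terms once $N$ is large. I conclude that there is $N_0 = N_0(k)$ so that for every squarefree $N \ge N_0$ and every sign vector $w$,
\[ c_1 N^{1-\eps} \;\le\; \dim S(N,w) \;\le\; \dim S_{2k}(N) \;\ll_k\; N\log\log N; \]
in particular all $2^{\omega(N)}$ eigenspaces of level $N \ge N_0$ are nonzero.

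For the upper bound, if an eigenspace of level $N \ge N_0$ has $\dim S(N,w) < X$ then $c_1 N^{1-\eps} < X$, so $N \ll X^{1+\eps}$. Thus only levels $N \ll X^{1+\eps}$ contribute, and bounding the number of eigenspaces of each such level by $2^{\omega(N)}$ gives
\[ \nu_{2k}(X) \ll \sum_{N \le X^{1+\eps}} 2^{\omega(N)} \ll X^{1+\eps}\log X \ll X^{1+2\eps}, \]
using $\sum_{N \le Y} 2^{\omega(N)} \ll Y \log Y$; the finitely many $N < N_0$ contribute $O(1)$. For the lower bound, I would set $Y = c X/\log\log X$ with $c = c(k)$ small enough that $C_k Y \log\log Y < X$. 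By the display above, every eigenspace of squarefree level $N_0 \le N \le Y$ is nonzero with dimension $< X$, hence is counted by $\nu_{2k}(X)$, and there are $\sum_{N_0 \le N \le Y}\mu^2(N) 2^{\omega(N)} \gg Y\log Y \gg X$ of them.

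The only nontrivial input, and the main obstacle, is the bound $\mathrm{tr}(W_Q \mid S_{2k}^{\mathrm{new}}(N)) \ll_{k,\eps} N^{1/2+\eps}$ for $Q>1$: the trivial bound $\mathrm{tr}(W_Q) \ll N$ does not suffice, since I need the main term $D(N)/2^{\omega(N)}$ to dominate the error uniformly in $w$ — this is what simultaneously guarantees nonvanishing of the eigenspaces (for the lower bound) and forces a small eigenspace to have small level (for the upper bound). This is precisely the statement that the Atkin--Lehner signs equidistribute, and it follows from the explicit trace formula; the remaining ingredients ($\varphi(N) \gg N^{1-\eps}$, $2^{\omega(N)} \ll N^\eps$, and $\sum_{N \le Y} 2^{\omega(N)} \ll Y\log Y$) are standard.
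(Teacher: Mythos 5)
Your proof is correct and follows essentially the same route as the paper: the key input in both is that each Atkin--Lehner eigenspace has dimension $\frac{D(N)}{2^{\omega(N)}} + O(N^{1/2+\eps}) \gg N^{1-\eps}$, so small dimension forces $N \ll X^{1+\eps}$, and then one counts eigenspaces via $\sum_{N\le t} 2^{\omega(N)} \ll t\log t$. The only difference is that you derive the eigenspace dimension asymptotic from orthogonality and the trace-of-$W_Q$ bound explicitly, whereas the paper cites the refined dimension formulas of \cite{me:refdim} (with the argument in \cite{me:qmf-zeroes}); your lower-bound argument is also slightly more careful than the paper's ``obvious,'' since you verify nonvanishing of the eigenspaces.
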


\begin{proof}
Since $\dim S_{2k}^{\mathrm{new}}(N) \ll N$,
the lower bound is obvious.  

Let us show the upper bound.
First, it follows from the dimension formulas for Atkin--Lehner eigenspaces from \cite{me:refdim} that any Atkin--Lehner eigenspace in $S_{2k}^{\mathrm{new}}(N)$ has dimension $\frac{(k-1)\phi(N)}{12 \cdot 2^{\omega(N)}} + O(N^{1/2 + \eps})$.  (The necessary argument, though not the statement, is given in the proof of \cite[Proposition 3.10]{me:qmf-zeroes}.)  This dimension is $\gg N^{1-\eps}$.
Hence if an Atkin--Lehner eigenspace has dimension less than $X$, it occurs in a level $N \ll X^{1+\eps}$.  Now the number of Atkin--Lehner eigenspaces in levels less than $t$ is bounded by $\sum_{N \le t} 2^{\omega(N)}$.  It is known that this latter sum is $\frac 6{\pi^2} t \log t + O(t)$.
\end{proof}

\cref{lemma:AL_dims} combines with \eqref{eq:prob} and \eqref{eq:probpower} to produce the following heuristic.

\begin{heuristic} \label{heur:main}
Suppose the number of rational newforms of weight $2k$ and squarefree level $N < X$ is $O(X^{1-\alpha})$ for some $\alpha < 1$.
Then, for any $\eps > 0$, the number of degree $d$ weight $2k$ newforms
of squarefree level $N \le X$ is $O(X^{1-\alpha d+\eps})$ as $X \to \infty$.
\end{heuristic}

Our reasoning for this heuristic is as follows.  Under the hypothetical bound $O(X^{1-\alpha})$, the lemma indicates that the probability of an Atkin--Lehner space of dimension $n$ having a size 1 Galois orbit is approximately $n^{-\alpha}$.  Assuming uniform distribution of Hecke polynomials in $H(n)$, the probability of a size $d$ Galois orbit is approximately the probability of a degree $d$ factor of $c_{T_p}$, which by  \eqref{eq:prob} and \eqref{eq:probpower} is approximately $n^{-d \alpha}$.  
Applying the lemma again leads to the stated heuristic.

Combining the Brumer--McGuinness and Watkins heuristics for $d=1$ with
\cref{heur:main} now suggests \cref{conj1} from the introduction.


\subsection{Assessment of the model}
The random model for Hecke polynomials in \cref{sec:randhecke} uses the counting measure on $H_n$, i.e., all polynomials in $H_n$ are equally likely.
If this were the case, the heuristic reasoning above would suggest both upper and lower bounds: $X^{1- \frac d6 - \eps} \ll \cC_d(X) \ll X^{1 - \frac d6 + \eps}$, so the upper bound in \cref{conj1} would be essentially optimal.

However, there are other factors controlling the distribution of Hecke polynomials  in $H_n$.  For instance, trace formulas place arithmetic conditions on the roots of Hecke polynomials.  Moreover, there are vertical and horizontal equidistribution results about convergence of the roots to Plancherel and Sato--Tate measures.
For this reason, we view our random model as a first approximation to counting degree $d$ forms.  

In \cref{empirical_degree_counts_section}, we will present data which suggests this heuristic does give an upper bound, but possibly not an optimal one.
It is really the data that lends credence to \cref{conj1}.

An alternative perspective, which we will explore in \cref{sec:ratfld}, is that
it is more natural to model the distribution of degree $d$ forms by 
modeling $d$-dimensional modular abelian varieties.  The analysis we do there 
for $d=2$ is compatible with the notion that the random Hecke polynomial heuristic gives a valid upper bound which might not be optimal.

\subsection{Finiteness questions} Related to the question of asymptotics are several questions about finiteness.  We do not investigate them here, but suggest them for future consideration.

\begin{enumerate}
\item   
We can ask: for what $d$ are there infinitely many weight 2 newforms of squarefree level?  \cref{conj1} asserts such $d$ must be at most 6, but also suggests the answer could be negative for $d=6$.  We know a positive answer for $d=1$, and expect a positive answer for $d=2$.  \cref{sec:highdeg} and our data suggest the answers may be positive for $d=3, 4$ also.

\item More generally, one can ask the same question in weight $2k$.
Roberts' conjecture \cite{roberts} implies that there are only finitely many quadratic twist classes of non-CM rational newforms in weight $2k \ge 6$.  So  \cref{heur:main} suggests that there are only finitely many newforms of squarefree level of fixed weight $2k \ge 6$ and any fixed degree $d \ge 1$.
  One might similarly expect to have finitely many quadratic twist classes of non-CM newforms of fixed degree $d$ and weight $2k \ge 6$.
  
 \item One can also ask whether there should be a uniform version 
of the finiteness part of \cref{conj1}, i.e., whether for 
sufficiently large squarefree $N$ and some fixed $d_0$ (possibly $d_0 = 6$), 
each Atkin--Lehner eigenspace 
has a unique Galois orbit of size $d \ge d_0$.  This seems plausible
based on \cref{heur:main}.  See \cref{question:lmfdb_degd} for a more precise question in prime level.
\end{enumerate}

\section{Hecke fields} \label{sec:ratfld}

In \cref{sec:degree} we considered the question of how often degree $d$ newforms occur and presented a random Hecke polynomial model, which, at least for prime levels, appears to give asymptotic upper bounds.  Here we consider the refined question of how often a specific degree $d$ rationality field $K$ should occur, and relate this question to rational points on Hilbert modular varieties.  We discuss possible lower bounds for fixed quadratic fields, prove some lower bounds, and predict that $\Q(\sqrt 5)$ is the most common quadratic rationality field.

\subsection{Modular varieties} \label{sec:modabvar}
First recall the connection between weight 2 modular forms and abelian varieties.  

Let $N \ge 1$.
To a newform $f \in S_2(N)$ with $[K_f : \Q] = d$, Shimura constructed a $d$-dimensional simple abelian variety $A_f/\Q$ satisfying the following properties.
First, $A_f$ is a quotient of $J_0(N)$.  Moreover $A_f$
is isogenous to $A_g$ if and only if $f$ and $g$ are Galois conjugates.  The endomorphism
algebra $\mathrm{End}^0(A_f) \coloneqq \mathrm{End}(A_f) \otimes \Q \simeq K_f$.  The conductor of
$A_f$ is $N^d$.  Finally, $L(s,A_f)
= \prod_\sigma L(s, f^\sigma)$, where
$f^\sigma$ ranges over the Galois conjugates
of $f$.

In general, the center of the endomorphism algebra of a $d$-dimensional abelian
variety $A$ has degree $\le d$.  If $\mathrm{End}^0(A)$ contains a totally
real field $K$ of degree $d = \dim A$, then we say $A$ has maximal real
multiplication (RM).  Any $A_f$ as above has maximal RM, and conversely
if $A/\Q$ is a simple abelian variety with maximal RM, then it is isogenous
to some $A_f$ \cite[Lemma 3.1]{me:conductor}.
%
Hence the correspondence $f \mapsto A_f$ yields a bijection between degree $d$
newforms $f$ of weight $2$ and
isogeny classes of
$d$-dimensional simple abelian varieties
$A/\Q$ with maximal RM.

We propose a heuristic approach to predicting coarse asymptotic counts of such objects.
Let $K$ be a totally real number field of degree
$d$, and $\mathfrak a$ be an ideal in $\calO_K$. The quotient $\mathfrak H^d/\mathrm{SL}(\calO_K \oplus \mathfrak a)$
parametrizes $d$-dimensional complex abelian
varieties with RM by $\calO_K$ together with a
polarization structure corresponding to $\mathfrak a$ (see \cite{goren:book} for a precise statement).  Compactifying this quotient
and desingularizing gives a Hilbert modular variety $Y(\calO_K \oplus \mathfrak a)$.  

Now consider a newform $f \in S_2(N)$ with $K_f = K$.  The abelian variety $A_f$ has endomorphism
ring an order in $\calO_K$.  Typically we expect
it is all of $\calO_K$, but if not, one can replace $A_f$ by an isogenous variety with RM by $\calO_K$.  Thus $f$ corresponds to a rational point $y$ on $Y(\calO_K \oplus \mathfrak a)$ for some $\mathfrak a$, which we can take to be in a given set of representatives for $\mathrm{Cl}^+(K)$.  

This correspondence is far from one-to-one.  First, replacing $A_f$ by
an isogenous variety, or modifying the polarization structure, may give a different 
point $y$ on $Y(\calO_K \oplus \mathfrak a)$.  Second, if $g$ is another weight 2 newform and $A_g$ is $\C$-isogenous to $A_f$, then both $f$ and $g$ correspond to the same rational points.  Third,
this is not a fine moduli space, so not all rational points on $Y(\calO_K \oplus \mathfrak a)$ will correspond to abelian varieties defined over $\Q$, and of
those that do, some will correspond to non-simple abelian varieties.

That said, it seems reasonable to expect that, generically, quadratic twist classes of Galois orbits of weight 2 newforms correspond to finite sets of rational points on $Y = \bigcup_{a \in \mathrm{Cl}^+(K)} Y(\calO_K \oplus \mathfrak a)$.  Thus one can attempt estimate the number of quadratic twist classes by estimating counts of rational points on $Y$.  A priori, it is not clear how different orderings of classes of newforms (e.g., by minimal level) will correlate with different orderings of sets of rational points (e.g., by minimal height, for some choice of height function), and we will speculate more on this for $d=2$ anon.  

Note that typically (each component of) $Y$ will be of general type, and one might expect that it has finitely many (and often no) rational points.  Hence, for a given $d$, to estimate counts of degree $d$ newforms, it should in principle suffice to consider finitely many $Y$.  Moreover, this philosophy suggests that some totally real degree $d$ rationality fields will be more common than others, roughly according to whether the moduli spaces $Y$ have many or few rational points.  Of course this is not the only consideration, due to various complications of the correspondence between newforms and rational points mentioned above.  

This philosophy is in line with Coleman's conjecture (e.g., see \cite{BFGR}), which predicts there are only finitely many isomorphism classes of endomorphism algebras for $d$-dimensional abelian varieties over $\Q$.  Hence Coleman's conjecture implies that,
for a fixed $d$, only finitely many degree $d$ rationality fields $K_f$ occur as $f$ 
varies over weight 2 newforms.


\subsection{Rational points on Hilbert modular surfaces}
\label{sec:HMS}

Now we estimate point counts on certain Hilbert modular surfaces, and pursue the ideas of the previous section for $d=2$.  

Let $D > 0$ be a fundamental discriminant and
$\calO_D$ be the ring of integers of $\Q(\sqrt D)$.
Let $Y_-(D)$ be the Hilbert modular surface constructed from the quotient $\mathfrak H^2/\mathrm{SL}(\mathcal O_D \oplus \sqrt D \mathcal O_D)$.  This parametrizes principally polarized abelian surfaces with RM by $\calO_D$ (together with a polarization structure).  See \cite{vdG}, \cite{goren:book} for details.  For brevity, we will write RM $D$ for RM by $\calO_D$.

For our heuristic point counts, we will use explicit models for Hilbert modular surfaces.  For 
$D < 100$, Elkies and Kumar \cite{EK} computed models for $Y_-(D)$.  By work of Hirzebruch and Zagier
\cite{HZ},  $Y_-(D)$ is rational (i.e., birational to $\mathbb P^2$) if and only if $D \in \{5, 8, 12, 13, 17\}$.

We expect that 100\% of degree 2 weight 2 newforms correspond to rational points on Hilbert modular surfaces with the most rational points, i.e., the rational surfaces.  While the Hilbert modular surfaces parametrizing non-principally polarizable surfaces with RM $D$ are rational over $\C$ for $D = 12, 21, 24, 28, 33, 60$ (see \cite[Theorem VII.3.3]{vdG}), we at least expect that the 5 rational $Y_-(D)$'s should account for a positive proportion of degree 2 weight 2 newforms, and this is supported by data.

To be more precise, the polarization classes of abelian surfaces with RM $D$ are in bijection with the narrow ideal classes $\mathrm{Cl}^+(\Q(\sqrt D))$.  In particular, if $D=5, 8, 13, 17$, the abelian surface is automatically principally polarizable.  For $D=12$, the narrow class number is 2, and the moduli spaces for each polarization type are rational (at least over $\C$), so it is not clear whether a positive proportion of abelian surfaces with RM $12$ should be principally polarizable.  We cannot yet analyze counts for the non-prinicipal polarization types as we do not know models for the corresponding moduli spaces together with appropriate invariants.
However, our data suggest that, at least for prime level, most degree 2 weight 2 newforms have rationality field $\Q(\sqrt D)$ with $D = 5$ or $8$, and thus correspond to points on  $Y_-(5)$ and $Y_-(8)$.

For the remainder of the section, assume $D \in \{5, 8, 12, 13, 17\}$.  Then $Y_-(D)$ is birational to $\mathbb P^2_{m,n}$.  Let $\mathcal A_2$ be the moduli space for principally polarized abelian surfaces.  Forgetting the RM action yields a map $Y_-(D) \to \mathcal A_2$.  

Let $\mathcal M_2$ be the moduli space of genus 2 curves.  To a genus 2 curve $C : y^2 = h(x)$,
one associates Igusa--Clebsch invariants $I_{2j}(C)$ for $j = 1, 2, 3, 5$.  Here $I_{2j}(C)$ can be regarded as a degree $2j$-polynomial in the coefficients of $h(x)$
, and $I_{10}(C)$ is the discriminant of $h$.
One can realize $\mathcal A_2$ as weighted projective space $\mathbb P^3_{1,2,3,5}$ with coordinates $(I_2 : I_4 : I_6 : I_{10})$.  The Torelli map $\mathcal M_2 \to \mathcal A_2$ sends the moduli of $C$ to $(I_2(C) : I_4(C) : I_6(C) : I_{10}(C))$, and the image is the complement of the hyperplane $I_{10} = 0$.
We note that Igusa--Clebsch invariants are only isomorphism invariants of $C$ up to weighted projective scaling. 

Elkies and Kumar \cite{EK} gave a birational model for $Y_-(D)$.  In particular, for generic affine coordinates $(m,n) \in \mathbb A^2$, one has an associated point on $Y_-(D)$ and thus weighted projective coordinates
$(I_2(m,n) : I_4(m,n) : I_6(m,n) : I_{10}(m,n)) \in \mathcal A_2$, where the $I_{2j}(m,n)$'s are explicit rational functions in $m, n$. 

Now we will attempt to estimate the number of rational points $(m,n)$ with bounded Igusa--Clebsch invariants. First we want to scale Igusa--Clebsch invariants (in $\mathbb P^3_{1,2,3,5}$) to be integral, as will be the case for the $I_{2j}(C)$'s given a curve $C$ over $\Z$.  Let us write $(m,n) = (a/c, b/c)$ for $a,b,c \in \Z$ with $\gcd(a,b,c) = 1$.  Regarding $a, b, c$ as variables, we scale the $I_{2j}(m,n)$'s to get polynomials $I_{2j}(a,b,c) \in \Z[a,b,c]$'s which are minimal integral over $\Z[a,b,c]$.  That is, we scale out denominators, and also any factors of the numerators $\pi$ so that $\pi^j \mid I_{2j}(a,b,c)$ for all $j \in \{1,2,3,5\}$ implies $\pi$ is a unit in $\Z[a,b,c]$.  The resulting $I_{10}(a,b,c)$'s (which are uniquely determined up to $\pm 1$) are given in \cref{tab:I10s}.\\

\noindent
\begin{tabular}{|c|c|}
\hline
$D$ & $I_{10}(a,b,c)$ \rule{0pt}{1em} \\ [0.05ex]
\hline
 \multirow{2}{*}{$5$} &
 $8(a^{5} - 10 a^{3} b^{2} + 25 a b^{4} + 5 a^{4} c - 50 a^{2} b^{2} c + 125 b^{4} c $ \rule{0pt}{1em}\\ [0.05ex]
 & $- 5 a^{3} c^{2} + 25 a b^{2} c^{2} - 45 a^{2} c^{3} + 225 b^{2} c^{3} + 108 c^{5})^{2}$ \\ [0.05ex]
\hline
 $8$ &
$8  c^{3}   (a - c)^{3}   (a + c)^{6}   (-16 a^{2} b^{2} + 32 b^{4} + a^{3} c - 56 a b^{2} c + 9 a^{2} c^{2} - 72 b^{2} c^{2} + 27 a c^{3} + 27 c^{4})^{2}$ \rule{0pt}{1em} \\ [0.05ex]
\hline
 $12$ &
$(a + c)^{3}   (a - c)^{9}   (-27 a^{2} + b^{2} + 27 c^{2})^{2}   (a^{2} b + 9 a^{2} c - 8 c^{3})^{3} $ \rule{0pt}{1em} \\ [0.05ex]
\hline
 \multirow{2}{*}{$13$} &
$2^{3} \cdot 3^{11} \cdot  (-267 a^{3} + 72 a^{2} b - a b^{2} - 3552 a^{2} c + 1440 a b c - 128 b^{2} c + 768 a c^{2})^{2} $ \rule{0pt}{1em} \\ [0.05ex]
& $\cdot \, (-12 a^{3} + 3 a^{2} c + b^{2} c)^{4}   (-a^{3} - 150 a^{2} c + 6 a b c - 264 a c^{2} + 120 b c^{2} + 64 c^{3})^{4} $ \\ [0.05ex]
\hline
 \multirow{2}{*}{$17$} &
$2^{15} \cdot 3^{11} \cdot (-132 a + b + 3 c)^{3} (-256 a^{3} - 1200 a^{2} c + 18 a b c - 6006 a c^{2} + 99 b c^{2} + 41 c^{3})^{5} $ \rule{0pt}{1em}\\ [0.05ex]
& $ \cdot \, (456 a^{2} + a b + 723 a c - 8 b c + 24 c^{2})^{3}  (4608 a^{3} - 1728 a^{2} c + b^{2} c + 216 a c^{2} - 9 c^{3})^{2} $ \\ [0.05ex]
\hline
\end{tabular}
\begin{tablecap}\label{tab:I10s}
  $I_{10}$ polynomials for $Y_-(D)$
\end{tablecap}

Specializing $a, b, c$ to integers, we denote by $I_{2j}^{\min}(a,b,c) \in \Z$ scalings which are minimal integral over $\Z$.  Note that $I_{2j}^{\min}(a,b,c) \mid I_{2j}(a,b,c)$ but they are often not equal.  E.g., when $D=5$, then the invariants $I_{2j}(1,3,2)$'s are $(-2^{4} \cdot 5^{3},
 2^{8} \cdot 5^{4},
 -2^{15} \cdot 5 \cdot 599,
 2^{21} \cdot 3^{8})$, whereas the $\Z$-minimal invariants $I_{2j}^{\min}(a,b,c)$ are obtained by scaling out a factor of $2^4$, i.e., they are $(- 5^{3}, 5^{4}, - 2^{3} \cdot 5 \cdot 599, 2 \cdot 3^{8})$.

First we want to estimate, in terms of a real parameter $T$, the growth of the cardinality of
\[Z_D(T) \coloneqq \{ (a,b,c) \in \Z^3 - U_D : \gcd(a,b,c) = 1
\text{ and } | I^{\min}_{2j}(a,b,c) | < T^{2j} \text{ for } j \in 1, 2, 3, 5 \}, \]
where $U_D$ consists of $(a,b,c)$ such that the map $(a/c, b/c) \to \mathcal A_2$ is either undefined (e.g., $c = 0$) or is not finite-to-one (e.g., for $D=8$, all points with $m=a/c=-1$ map to $(1 : 0 : 0 : 0) \in \mathcal A_2$).  
Really our interest is just in bounding $I^{\min}_{10}$, but we impose bounds on the other $I_{2j}^{\min}$'s to guarantee finiteness of $Z_D(T)$.

Precise estimates are difficult, so we make two simplifications which are sufficient to get lower bounds: 
(1) We impose the stronger bound $| I^{\min}_{2j}(a,b,c) | \le | I_{2j}(a,b,c) | < T^{2j}$. 
(2) We will suppose each monomial in $I_{2j}(a,b,c)$ is bounded by $T^{2j}$. 
Note that (1) and (2) can respectively be thought of as non-archimedean and archimedean simplifications to monomials.

\begin{proposition} \label{prop:ZDT}
We have $\# Z_D(T) \gg  T^{r_D}$ as $T \to \infty$, where
respectively $r_D = 3, \tfrac 3{2}, 2, 1, 1$ for $D = 5, 8, 12, 13, 17$.
\end{proposition}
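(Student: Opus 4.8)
The plan is, for each $D$, to exhibit an explicit set of $\gg T^{r_D}$ coprime triples $(a,b,c) \notin U_D$ lying in $Z_D(T)$. By simplification (2) it suffices to bound every monomial $a^i b^k c^l$ of every $I_{2j}(a,b,c)$ by $T^{2j}$; since $I^{\min}_{2j} \mid I_{2j}$ and each $I_{2j}$ has boundedly many terms, simplification (1) then places such triples in $Z_D(T)$ at the cost of an absolute constant. The invariants $I_{2j}(a,b,c)$ are homogeneous of degree $j s_D$, where $s_D = \tfrac15 \deg I_{10} = 2, 4, 5, 6, 6$ for $D = 5,8,12,13,17$. Restricting to a box $|a| \le T^\alpha$, $|b| \le T^\beta$, $|c| \le T^\gamma$ turns each monomial bound into the linear inequality $i\alpha + k\beta + l\gamma \le 2j$, and the number of coprime lattice points in such a box lying off the proper subvariety $U_D$ is $\gg T^{\alpha+\beta+\gamma}$. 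Thus the problem reduces to maximizing $\alpha+\beta+\gamma$ over the polytope cut out by these inequalities.

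For the four discriminants $D \in \{5,8,13,17\}$ I would take the symmetric point $\alpha = \beta = \gamma = 2/s_D$. By homogeneity every monomial of $I_{2j}$ has total degree $j s_D$, so on this box it has size exactly $T^{2j}$ and all the bounds hold simultaneously; notably, this uses only $\deg I_{10}$ and not the explicit shapes of $I_2, I_4, I_6$. This produces $\gg T^{6/s_D}$ points, and $6/s_D = 3, \tfrac32, 1, 1$, which are the claimed values of $r_D$. Checking that coprimality and the removal of $U_D$ cost only a constant factor is routine, since $U_D$ is a proper subvariety.

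The case $D = 12$ is the crux and the principal obstacle, since here $s_{12} = 5$ gives the symmetric bound $T^{6/5}$, far short of $T^2$. In fact no box argument can reach exponent $2$: maximizing $\alpha+\beta+\gamma$ against the monomials of $I_{10}$ alone already caps strictly below $2$ (I compute $\tfrac{10}7$), essentially because the Newton polytope of $I_{10}$ is constrained — its $b$-degree never exceeds $7$, so the balanced exponent vector $(\tfrac{25}3,\tfrac{25}3,\tfrac{25}3)$ is infeasible. The extra growth must therefore come from the minimal scaling itself, rather than from the crude inequality $|I^{\min}_{2j}| \le |I_{2j}|$ underlying simplification (1): one must show that along a suitable two-parameter subfamily the common factor $d(a,b,c)$ with $d^j \mid I_{2j}$ is systematically large, so that $I^{\min}_{10} = I_{10}/d^5$ is far smaller than $I_{10}$. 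The repeated-factor structure $I_{10} = (a+c)^3(a-c)^9(\cdots)^2(\cdots)^3$ is exactly what should force $d$ to grow. Identifying the correct subfamily and proving the required lower bound on $d$ is where essentially all the difficulty of the $D=12$ case lies; the other four discriminants need none of this.
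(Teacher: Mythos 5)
Your treatment of $D = 5, 8, 13, 17$ is correct and is essentially the paper's own argument: each $I_{2j}(a,b,c)$ is homogeneous of degree proportional to $j$, so the symmetric box $|a|,|b|,|c| \le T^{2j/\deg I_{2j}}$ satisfies all the monomial bounds simultaneously and yields $\gg T^{30/\deg I_{10}}$ admissible coprime triples, i.e.\ $T^3$, $T^{3/2}$, $T$, $T$, with coprimality and the removal of $U_D$ costing only a constant factor.

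But the case $D=12$ is left unproved, and by your own account it is the crux; this is a genuine gap, since $r_{12}=2$ is exactly the exponent your method cannot reach. Moreover, your proposed repair --- showing that along some subfamily the common content $d(a,b,c)$ with $d^{j}\mid I_{2j}$ is systematically large, so that $I_{10}^{\min}=I_{10}/d^{5}$ drops --- is not the mechanism the paper uses. The paper instead changes coordinates: writing $(m,n)=(r/s,t/u)$ gives a $\mathbb{P}^1\times\mathbb{P}^1$ parametrization, and the invariants $I_{2j}(r,s,t,u)$ made minimal over $\Z[r,s,t,u]$ have total degrees $6,12,18,30$ but degrees only $2,4,6,10$ in $(t,u)$ alone. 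Taking $r,s$ in a bounded set and $|t|,|u|\ll T$ then bounds every $|I_{2j}|$ by $T^{2j}$ and produces $\gg T^{2}$ points. So the missing idea is an anisotropic box in a different (four-variable, bihomogeneous) parametrization, not an analysis of gcd content; the repeated factors of $I_{10}(a,b,c)$ that you single out are not what drives the count. Without this step, or some substitute for it, the $D=12$ assertion of the proposition is not established.
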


\begin{proof}
Each $I_{2j}(a,b,c)$ is a homogeneous polynomial in $a,b,c$, say of degree $d_j$.  Taking $a, b, c$ independently up to size $T^{2j/d_j}$ shows there are $\gg T^{6j/d_j}$ tuples $(a,b,c)$ with $|I_{2j}(a,b,c)| < T^{2j}$.  Moreover, one checks the ratio $j/d_j$ is independent of the choice of $j$.  Since the conditions $\gcd(a,b,c) = 1$ and $(a,b,c) \not \in U_D$ are satisfied for a positive proportion of $(a, b, c)$, we get the asymptotic lower bound $\# Z_D(T) \gg T^{30/d}$, where $d = \deg I_{10}(a,b,c)$. 
We respectively have $d = 10, 20, 25, 30, 30$ for $D = 5, 8, 12, 13, 17$, which gives the asserted lower bounds for $D= 5, 8, 13, 17$.

For $D=12$, one can get better lower bounds using a $\mathbb P^1 \times \mathbb P^1$ parametrization for $(m,n)$.  Namely, write $(m,n) = (r/s,t/u)$ for $r,s,t,u \in \Z$.
Let $I_{2j}(r,s,t,u)$ be the minimal invariants over $\Z[r,s,t,u]$.  
These have degrees $6, 12, 18, 30$ for $j = 1, 2, 3, 5$.  Using same argument as above gives a lower bound of $\# Z_{12}(T) \gg T^{4/3}$.  
However, if we regard $I_{2j}(r,s,t,u)$ as polynomials only in $t$ and $u$, the respective degrees are $2, 4, 6, 10$ for $j = 1, 2, 3, 5$.  
Thus by taking $r, s$ uniformly bounded and $|t|, |u| \ll T$ yields $\# Z_{12}(T) \gg T^2$ as claimed.
\end{proof}

The lower bounds in the proposition are the optimal ones we could find using the $\mathbb P^2$ or $\mathbb P^1 \times \mathbb P^1$ parametrizations for $(m,n)$ by allowing either each of $a,b,c$ or $r,s,t,u$ to vary independently up to some power of $T$ (not necessarily the same power for each variable).  We remark that the optimal exponents for lower bounds using the $\mathbb P^1 \times \mathbb P^1$ parametrization for $D = 5, 8, 13, 17$ are $2, 4/3, 1, 1$, respectively.  To get these exponents, for $D=5$ one can take each of $|r|, |s|, |t|, |u| \ll T$.  For $D=8$, one takes $|r|, |s| \ll T^{2/3}$ and $|t|, |u| \ll 1$.  For both $D = 13$ and $D=17$, one takes $|r|, |s| \ll 1$ and $|t|, |u| \ll T^{1/2}$.

\begin{question} \label{qn:upper-bds}
For $D \in \{ 5, 8, 12, 13, 17\}$, is $\#Z_D(T) \ll T^{r_D+\eps}$ for any $\eps > 0$?
\end{question}

It is not clear if the simplifications to monomials affect the exponents in our estimates, but if the $I_{2j}$ polynomials are sufficiently general type, one might expect they only account for a multiplicative factor of size $O(1+T^\eps)$.  Note that in the Brumer--McGuinness heuristics, it is believed the analogous archimedean simplification (2) only affects counts by an $O(1)$ factor.

A more serious reason to doubt the exponents in these lower bounds are optimal is that there may exist (i) other rational parametrizations of $Y_-(D)$ where the $I_{2j}$ degrees are smaller, or (ii) special curves on $Y_-(D)$ which intersect $Z_D$ in an especially large number of rational points.  Indeed, the proof makes clear that different parametrizations may yield different counts for a given surface.  

Now we explain how these estimates for $\# Z_D(T)$ are related to counting quadratic twist classes of weight 2 newforms $f$ with rationality field $\Q(\sqrt D)$.  As explained above, a positive proportion of $f$ (at least if $D \ne 12$) should correspond to rational points on $Y_-(D)$.  Conversely, a rational point on $Y_-(D)$ may not come from a simple abelian surface with RM defined over $\Q$---one needs that the Mestre conic has a point, the RM is defined over $\Q$, and the Jacobian is nonsplit.  However, we expect that these obstructions will only contribute logarithmic factors to asymptotics.  (See \cite{me:rm5} for details about when the Mestre obstruction vanishes and when the RM is defined over $\Q$.)

Consider a point $(a,b,c) \in Z_D(T)$ which corresponds to a $\bar \Q$-isomorphism class of simple abelian surfaces $A/\Q$.  Generically this $\bar \Q$-isomorphism class should be the family of Jacobians of quadratic twists of a genus 2 curve $C/\Q$ with RM $D$.  Suppose this, and assume $C$ is a minimal quadratic twist of conductor $N_C$.  Then $(a,b,c)$ corresponds to the quadratic twist class of some minimal newform $f$ of level $N_f$ where $N_C = N_f^2$.  
One can write down a minimal integral model for $C$, and the polynomially-defined Igusa--Clebsch invariants $I_{2j}(C)$ are necessarily divisible by $I_{2j}^{\min}(a,b,c)$.  Thus $I_{10}^{\min}(a,b,c)$ divides the minimal discriminant $\Delta_C = 2^{-12} I_{10}(C)$ of $C$.   One also knows that the conductor $N_C \mid \Delta_C$.

Now we would like to understand how $N_C$ relates to $I_{10}^{\min}(a,b,c)$ or $I_{10}(a,b,c)$.  There are no general upper or lower bounds, and in fact there are competing issues in opposite directions.  One is that $\Delta_C$ may be much larger than either $I_{10}^{\min}(a,b,c)$ or $I_{10}(a,b,c)$, and numerically this is quite typical.  E.g., if $p^m \parallel I_{10}^{\min}(a,b,c)$ it often happens that $p^{m+10} \mid \Delta_C$ (see \cite{liu} for local results).  On the other hand, the prime powers occurring in $N_C$ are often smaller than those in $\Delta_C$.  E.g., if $p^3 \parallel \Delta_C$ then necessarily $p^2 \parallel N_C$ or $p \nmid N_C$.
Some preliminary investigations suggest that the latter issue has more impact, and asymptotic counts of curves by $I^{\min}_{10}$ may essentially be lower bounds (up to logarithmic factors) of counts by conductor.  This leads us to ask:

\begin{question} \label{heuristic:ratpts}
Let $C^{\mathrm{tw}}_{2}(D;X)$ be the number of quadratic twist classes of non-CM weight $2$ newforms of minimal level $N < X$ with rationality field $\Q(\sqrt D)$.  Is $C^{\mathrm{tw}}_{2}(D;X) \gg X^{\alpha - \eps}$ for $\alpha$ such that $\# Z_D(T) \gg T^{\alpha/5}$?  Note that one can take $\alpha = \tfrac 35, \tfrac 3{10}, \tfrac 25, \tfrac 15, \tfrac 15$ for $D = 5, 8, 12, 13, 17$. 
\end{question}

Observe that all of these exponents are less than the exponent of 2/3 from the $d=2$ case of \cref{conj1}.  We will compare these speculative lower bounds with our prime level data below. 

Even if these lower bounds hold, there are several reasons why they may not be sharp.  For one, there is the issue of \cref{qn:upper-bds}.  Perhaps most serious is the issue of how prime powers in $N_C$ relate to prime powers in $\Delta_C$ mentioned above. 

Another potential issue comes from the way we defined $Z_D(T)$: for the comparison with conductors, we are only interested in bounds on $I_{10}^{\min}$, and there may be many points with $I^{\min}_{10}$ small relative to $I_2^{\min}, I_4^{\min}, I_6^{\min}$.  In particular, for $D = 12$, if one views $I_{2j}(a,b,c)$ as a polynomial in $b$ with $a=0$ and $c$ fixed, then the degrees are $2, 2, 4, 4$, so one gets at least $T^{5/2}$ points with $I_{10}^{\min} \ll T^{10}$, which is a better than the lower bound $\gg T^{2}$ in \cref{prop:ZDT}.  These rational points correspond to the curve $m=0$ on $Y_-(12)$, and numerical investigations suggests this is a Shimura curve parametrizing abelian surfaces with geometric endomorphism algebra the quaternion algebra of discriminant 6. 
Consequently, one might be able to take $\alpha = \tfrac 12$ in \cref{heuristic:ratpts} when $D=12$.  However, it is not clear that there is a family of genus 2 curves with RM 12 over $\Q$ that would achieve $\alpha = \tfrac 12$.

\subsection{Lower bounds for quadratic fields}
There are several known families of genus 2 curves with RM 5 and RM 8.  These can be used to give lower bounds on counting such curves with bounded discriminant, and therefore conductor.  For a genus 2 curve $C/\Q$, let $\Delta_C$ denote the minimal integral discriminant.

\begin{proposition} The number of $\bar \Q$-isomorphism classes of genus $2$ curves $C/\Q$ with RM $5$ (resp.\ RM $8$) with $| \Delta_C | < X$ is $\gg X^{1/6}$ (resp.\ $\gg X^{1/7}$).
\end{proposition}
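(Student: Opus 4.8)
The plan is, for each of $D=5$ and $D=8$, to exhibit an explicit algebraic family of genus $2$ curves over $\Q$ with RM $D$, to bound the minimal discriminant of a member in terms of the parameters, and then to count parameter values lying below a given discriminant bound. I would use one of the known explicit families of such curves; equivalently, one can feed the Elkies--Kumar invariants $(I_2:I_4:I_6:I_{10})(a,b,c)$ from \cref{tab:I10s} into Mestre's construction on the locus where the associated conic has a rational point, which by \cite{me:rm5} is Zariski dense and on which the RM descends to $\Q$. In either case one obtains an integral model $y^2 = h(x)$ with $h \in \Z[x]$ whose coefficients are polynomials in the parameters $(a,b,c)$ after clearing denominators, valid away from a thin locus of split Jacobians, non-maximal RM, CM, and the degenerate set $U_D$; discarding that locus costs only a constant factor.

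The elementary engine of the count is that the minimal discriminant obeys $|\Delta_C| \le |\operatorname{disc}(h)|$, since $\Delta_C$ divides the discriminant of any integral model. Thus if I let $(a,b,c)$ range over coprime triples with $|a| \ll T^{e_1}$, $|b| \ll T^{e_2}$, $|c| \ll T^{e_3}$, then every resulting curve satisfies $|\Delta_C| \ll T^{\delta}$, where $\delta$ is the weighted degree of $\operatorname{disc}(h)$ under these scalings, while the number of admissible triples is $\gg T^{e_1+e_2+e_3}$. Optimizing the exponents $e_i$ against $\delta$, exactly as in the proof of \cref{prop:ZDT}, yields the best available bound of the shape $\gg X^{(e_1+e_2+e_3)/\delta}$ with $X \asymp T^{\delta}$; the claim is that this optimum equals $1/6$ for $D=5$ and $1/7$ for $D=8$.

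To convert a count of parameter triples into a count of $\bar{\Q}$-isomorphism classes I would check that the assignment $(a,b,c) \mapsto [C] \in \mathcal M_2(\bar{\Q})$, given by the absolute Igusa invariants, is finite-to-one. This holds because it factors through the birational map from the parameter plane to $Y_-(D)$ followed by the forgetful map $Y_-(D) \to \mathcal A_2$, which is finite onto the Humbert surface; hence only $O(1)$ triples share an isomorphism class, and the $\gg T^{e_1+e_2+e_3}$ triples produce $\gg T^{e_1+e_2+e_3}$ distinct curves.

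The hard part will be the discriminant bound $\delta$: passing from the invariants to an honest integral curve inflates the degree well past $\deg I_{10}$, precisely because of the gap between $I_{10}^{\min}(a,b,c)$ and $\Delta_C$ noted before \cref{heuristic:ratpts}, where a single prime can swell the discriminant by as much as $p^{10}$. Pinning down a model whose discriminant degree is small enough to deliver $1/6$ and $1/7$, and verifying that these are the optimal scalings, is the crux; it is exactly this inflation that makes the provable exponents fall short of the heuristic $3/10$ and $3/20$ that $\# Z_D(T)$ alone would suggest. The remaining ingredients---Zariski density of the vanishing Mestre obstruction, excision of the split, CM, and degenerate loci, and generic finiteness of $Y_-(D)\to\mathcal A_2$---I expect to be routine.
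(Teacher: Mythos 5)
You have the right architecture --- exhibit an explicit family, bound the minimal discriminant by the discriminant of an integral model, count parameters, and check the parametrization is finite-to-one onto $\bar\Q$-isomorphism classes via absolute Igusa invariants --- and your finiteness argument (factoring through the generically finite map to $\mathcal A_2$) is essentially the same device the paper uses (it compares the absolute invariant $I_4/I_2^2$ for two parameter values and notes the resulting correspondence curve has bounded degree). But there is a genuine gap: the entire content of the exponents $1/6$ and $1/7$ is the existence of explicit one-parameter families whose integral models have discriminant of degree exactly $6$ (resp.\ $7$) in the parameter, and this is precisely the step you defer as ``the crux'' without resolving it. The paper does not obtain these families by running Mestre's construction on the Elkies--Kumar parametrization of $Y_-(D)$; as you yourself observe, that route inflates the discriminant degree far beyond $\deg I_{10}$ and there is no argument given (here or in the paper) that it can be beaten back down to $6$ or $7$. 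Instead the paper reaches for pre-existing explicit equations: Brumer's three-parameter RM $5$ family specialized to $C_d : y^2 + (x^3+x+1)y = -dx^3 + x^2 + x$, whose model discriminant is $(27d^3-81d^2-34d-103)^2$ of degree $6$ in $d$, and Mestre's RM $8$ family specialized to a one-parameter subfamily $C'_b$ with discriminant of degree $7$ in $b$. Without producing such a family (or an equivalent substitute), your argument establishes only that \emph{some} exponent $\gg X^{1/\delta}$ holds for whatever $\delta$ your construction actually achieves, not the claimed $1/6$ and $1/7$.

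A secondary, fixable issue: your count is phrased over coprime triples $(a,b,c)$ in a two-dimensional parameter space with the Mestre obstruction handled by Zariski density of its vanishing locus. Zariski density alone does not give you a positive proportion (or even a usable lower bound) of integral points of bounded height on which the conic has a rational point, so ``discarding that locus costs only a constant factor'' is not justified as stated. The paper sidesteps this entirely by working with one-parameter families that are already given by explicit Weierstrass equations over $\Q$ for every integer value of the parameter, so no Mestre obstruction ever arises; the only excision needed is of the finitely many (or congruence-controlled) parameters giving singular or split fibers.
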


\begin{proof}
First consider RM 5.  Brumer exhibited a 3-parameter family of curves $C_{b,c,d}$ with RM 5 over $\Q$ (see \cite{brumer:rank} for an announcement and \cite{hashimoto} for a proof), however it is not clear when two such curves are isomorphic, either over $\Q$ or $\bar \Q$.  We consider the 1-parameter subfamily $C_d$ with $b=c=0$, which is given by
\[ C_d :  y^2 + (x^3 + x + 1)y = -dx^3 + x^2 + x. \]
This defines a genus 2 curve with RM 5 for all $d \in \Z$ ($I_{10}$ is never 0 for $d \in \Z$), and the discriminant of this model, $(27d^3 - 81d^2 - 34d - 103)^2$, is degree $6$ in $d$.  Thus to complete the RM 5 case of the proposition, it suffices to show that the number of $C_{d'}$ isomorphic to a given $C_{d}$ over $\bar \Q$ is finite and uniformly bounded.  

Let $I_{2j}$ (resp.\ $I_{2j}'$) denote the polynomial Igusa--Clebsch invariants for $C_d$ (resp.\ $C_{d'}$) for $j = 1, 2, 3, 5$.  Then $I_4/I_{2}^2$ and is an absolute invariant for $C_d$.  Thus if $C_d$ and $C_{d'}$ are $\bar \Q$-isomorphic, one must have $F := (I_2')^2 I_4 - I_2^2 I_4' = 0$.  Now $F=0$ defines a union of 3 curves in the $(d,d')$-plane, none of which are of the form $d=d_0$.  So the number of $C_{d'}$ which are $\bar \Q$-isomorphic to a fixed $C_d$ is bounded by $\deg F$.

Now consider RM 8.  Here we use Mestre's 2-parameter family $C'_{a,b}$ of genus 2 RM 8 curves over $\Q$ from \cite{mestre:ex}.  Consider the subfamily $C'_b = C'_{2,b}$, which is given by
\[ C'_b :  y^2 = 7500x^5 + (-75b + 3400)x^4 + (-34b + 2283)x^3 + (-3b + 1111)x^2 + 177x + 9. \]
This is a genus 2 curve with RM 8 for $b \in \Z - \{-88, 112 \}$, and the discriminant has degree 7 in $b$.  One can complete the argument just as in the RM 5 case.
\end{proof}

\begin{corollary} \label{cor:lower}
The number of quadratic twist classes of weight $2$ newforms with rationality field $\Q(\sqrt 5)$ (resp.\ $\Q(\sqrt 2)$) and minimal level $N < X$ is $\gg X^{1/3}$ (resp.\ $\gg X^{2/7}$).
\end{corollary}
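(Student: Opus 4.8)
The plan is to transport the curve counts of the preceding proposition over to newforms via the Jacobian and the correspondence $f \mapsto A_f$ recorded in \cref{sec:ratfld}. Given a genus $2$ curve $C/\Q$ with RM $D$ (where $D = 5$ or $D = 8$), its Jacobian $\mathrm{Jac}(C)$ is an abelian surface with RM by $\calO_D$; for a generic member of the one-parameter families $C_d$ and $C'_b$ this surface is simple and has endomorphism algebra exactly $\Q(\sqrt D)$, while the members that are non-simple or carry extra (QM or CM) endomorphisms cut out a proper Zariski-closed subset of the parameter line, so discarding them costs nothing in the asymptotics. Each surviving $\mathrm{Jac}(C)$ is then $\Q$-isogenous to some $A_f$ with $K_f = \Q(\sqrt D)$, and I would track conductors through this chain.

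The conductor bookkeeping is the engine of the argument. Isogenous abelian varieties share a conductor, and the conductor of a genus $2$ curve equals that of its Jacobian, so $N_C = N_{\mathrm{Jac}(C)} = N_{A_f} = N_f^{\,2}$, the last equality because $A_f$ has conductor $N_f^{\deg f} = N_f^2$. Since $N_C \mid \Delta_C$, any bound $|\Delta_C| < X$ forces $N_f < X^{1/2}$, and the minimal level of the quadratic twist class of $f$ is then also $< X^{1/2}$. Writing $Y = X^{1/2}$, the proposition's count of $\gg X^{1/6}$ (resp.\ $\gg X^{1/7}$) $\bar\Q$-isomorphism classes of curves with $|\Delta_C| < X$ becomes a supply of newforms of minimal level $< Y$, and the identities $X^{1/6} = Y^{1/3}$ and $X^{1/7} = Y^{2/7}$ produce exactly the asserted exponents.

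To convert a count of curves into a count of quadratic twist classes of newforms I would argue that the natural map is bounded-to-one. A generic genus $2$ curve has automorphism group $\{\pm 1\}$, so all its twists are quadratic and its $\bar\Q$-isomorphism class coincides with its quadratic twist class; quadratically twisting $C$ quadratically twists $A_f$, and hence $f$, so the assignment descends to a map from $\bar\Q$-isomorphism classes of curves to quadratic twist classes of newforms. Two curves land in the same fiber only when their Jacobians are isogenous up to quadratic twist, that is, when they arise as distinct principal polarizations within a single isogeny class.

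The hard part will be bounding these fibers uniformly. For a simple abelian surface with RM by the \emph{fixed} field $\Q(\sqrt D)$, the principal polarizations up to isomorphism are governed by the narrow class group $\mathrm{Cl}^+(\Q(\sqrt D))$, whose order depends only on $D$; since $D$ is fixed this gives a uniform bound on the number of genus $2$ Jacobians in any one isogeny class, hence on the fiber sizes. Dividing the curve count by this $O_D(1)$ constant leaves the exponent unchanged, and combined with the conductor bound of the second paragraph yields $\gg Y^{1/3}$ (resp.\ $\gg Y^{2/7}$) quadratic twist classes of minimal level $< Y$, as claimed. The one point genuinely demanding care is precisely this passage from $\bar\Q$-isogeny to $\bar\Q$-isomorphism: one must ensure the finiteness of polarizations is uniform over the isogeny classes actually arising in the family, rather than merely finite class by class.
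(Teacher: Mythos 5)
Your overall architecture matches the paper's: feed the one-parameter families $C_d$ and $C'_b$ from the preceding proposition through modularity, use $N_C = N_f^2$ together with $N_C \mid \Delta_C$ to convert $|\Delta_C| < X$ into a level bound $N_f < X^{1/2}$, and translate the curve counts $\gg X^{1/6}$ and $\gg X^{1/7}$ into $\gg Y^{1/3}$ and $\gg Y^{2/7}$ newform twist classes of minimal level below $Y = X^{1/2}$. That bookkeeping is exactly the paper's.

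The genuine gap is in how you dispose of the non-simple Jacobians. You assert that the members which are non-simple or carry extra endomorphisms ``cut out a proper Zariski-closed subset of the parameter line,'' i.e., a finite set, so that discarding them is free. That claim is false as stated, and it is the crux of the proof rather than a technicality: inside the Hilbert modular surface the locus where the endomorphism algebra strictly contains $\Q(\sqrt D)$ is a \emph{countable} union of curves (Humbert and Shimura loci), so a one-dimensional family not contained in any single component can still meet infinitely many of them, and a priori the integers $d \le T$ landing on this locus could even form a positive proportion. Making ``discarding costs nothing'' rigorous needs either a nontrivial theorem on exceptional specializations or, as the paper does, an elementary local argument: for $d \equiv 1 \bmod 5$ the model of $C_d$ has discriminant coprime to $5$, and computing the $L$-polynomial of the reduction mod $5$ shows the reduced Jacobian is nonsplit, hence the Jacobian over $\Q$ is nonsplit for that entire congruence class (and similarly $b \equiv 1 \bmod 7$ for the RM $8$ family). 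This positive-proportion nonsplitness statement is the one substantive input your argument is missing. Your closing concern about fibers of the map from curves to twist classes (distinct curves with isogenous Jacobians) is legitimate, and the paper's own terse proof does not address it explicitly either; but note your proposed fix is also not quite the right invariant, since the obstruction is not only the finitely many principal polarizations on a fixed surface (controlled by $\mathrm{Cl}^+(\Q(\sqrt D))$) but the number of isomorphism classes of abelian surfaces within a $\Q$-isogeny class, which is finite by Faltings yet not obviously uniformly bounded across the family.
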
  

\begin{proof}
If $C$ is a genus 2 curve with RM $D$ over $\Q$ with nonsplit Jacobian, then modularity tells us that $C$ corresponds to a weight 2 newform $f$ of level $N \le \sqrt{|D_C|}$ and rationality field $\Q(\sqrt D)$, so it suffices to show that a positive proportion of the curves in the proofs have nonsplit Jacobian.  For the RM 5 $C'_d$ family above, one can check that the above model has discriminant coprime to $5$ when $d \equiv 1 \bmod 5$.  Moreover computing the $L$-polynomial shows the mod 5 Jacobian is nonsplit, whence the Jacobian of $C_d$ over $\Q$ is nonsplit.  Similarly, for the RM 8 family, the curve $C'_b$ has nonsplit Jacobian when $b \equiv 1 \bmod 7$.
\end{proof}

Note that these lower bounds are significantly smaller than those in \cref{heuristic:ratpts}.

\begin{remark}~ \label{rem:lower}
\begin{enumerate}
\item
Under the Bateman--Horn conjecture \cite{bh}, $27d^3 - 81d^2 - 34d - 103$ is prime for $\gg X/\log X$ integers $d \equiv 1 \bmod 5$.  For such $d$, $C_d$ has prime-squared discriminant and the associated newform $f$ has prime conductor.  Consequently, subject to this conjecture, the above argument shows there are $\gg X^{1/3}/\log X$ weight 2 newforms $f$ with rationality field $\Q(\sqrt 5)$ and prime level $< X$.  The analogous argument does not work for RM 8 as the discriminant of $C'_b$ splits into linear factors over $\Z$.

\item Elkies \cite{elkies:preprint} recently gave similar lower bounds for genus 2 curves with RM 5 which satisfy an Eisenstein congruence, but with the exponent $\tfrac 13$ replaced by $\tfrac 14$.

\item One should similarly be able to give explicit lower bounds for $D=12, 13, 17$ (and some other $D$).  Namely, \cite{EK} gives infinite families of RM $D$ genus 2 curves for various $D$, though without explicit models.  One can use Mestre's algorithm to construct models, and then follow the above argument.  However, this  typically yields families with discriminants of very large degree, and so one would need to do more work to get decent lower bounds.
\end{enumerate}
\end{remark}

\subsection{Remarks for higher degree}\label{sec:highdeg}
Much less is known about Hilbert modular $n$-folds for $n > 2$ than for $n=2$.  Grundman and Lippincott (\cite{GL1, GL2}) have done work towards classifying such spaces by arithmetic genus for $n = 3, 4$.  Then Borisov and Gunnells \cite{borisov-gunnells} studied the geometry of a Hilbert modular 3-fold attached to $\Q(\zeta_7^+)$.  However, to our knowledge, not much is known about the explicit geometry of higher-dimensional Hilbert modular varieties beyond these works and their references.

One case where we do know of geometric constructions leading to weight 2 modular forms of degree $d > 2$ is the following. In \cite{mestre:ex}, Mestre
constructs (among other things) families of genus $d = \frac {p-1}2$ 
hyperelliptic curves with potential RM by $\Z[\zeta_p^+]$.  When $p = 5, 7$,
the RM is actually defined over $\Q$, and generically the curves should correspond to degree $2$ and $3$ modular forms with rationality fields $\Q(\sqrt 5)$ and $\Q(\zeta_7^+)$.  

Mestre's constructions have been extended by various authors.  For instance,
 \cite{hoffman-wang} and \cite{HLSW} construct genus 3 curves $C$ with RM by an order in $\Q(\zeta_7^+)$ such that the RM is generically defined over 
 the base field.  This at least suggests there may be infinitely many weight 2 newforms of squarefree level with rationality field $\Q(\zeta_7^+)$.

\section{Data}\label{sec:data}

The LMFDB \cite{lmfdb} currently contains all weight 2 newforms of level $N \le 10^4$ \cite{bbbccdldlrsv}.
However, this range is not nearly sufficient to study asymptotic behavior of the distribution of newform degrees in squarefree level.  For instance, one is still forced to have many small degree forms of level close to 10000 as the size of Atkin--Lehner spaces can still be small.  For instance, for
$N = 9870 = 2 \cdot 3 \cdot 5 \cdot 7 \cdot 47$, there are 183 newforms in
$S_2(N)$ divided among 32 Atkin--Lehner eigenspaces, and each Atkin--Lehner eigenspace has dimension between $3$ and $8$.

One can mitigate this effect by restricting to levels with at most 2 or 3 prime factors, or ordering counts by dimensions of Atkin--Lehner eigenspaces rather than level.  However, even with such considerations, the range is still not large enough to say much about asymptotic counts.  

Instead, we analyze data we computed in prime level using the algorithms from \cite{cowan}.   Namely, we computed all weight 2 newforms of prime level less than 
$2\cdot 10^6$ and degree at most $6$, as well as the degrees of all newforms
of prime level less than $10^6$. Our data is available on the first author's personal webpage, and is currently in the process of being added to the LMFDB.

\subsection{Data for counts by degree}\label{empirical_degree_counts_section}

In \cref{deg1234_counts_best_fit_fig}, we plot counts $\cC_d'(X)$ 
of degree $d$
Galois orbits in weight 2 and prime level at most
$X$ for $1 \le d \le 4$.  The plot uses a log-log scale, and because
the number of primes up to $X$ is approximately $\text{li}(X)$, we do
a least-squares fit of functions of the form $y = \log(a\text{li}(\exp(x)^b)))$ to the data
$$\left\{ (\log X, \log \cC_d'(X)) \,:\, X < 2\cdot 10^6, \, X\text{ prime}, \, \cC_d'(X) \geq 1 \right\}.$$

\begin{figure}[t]
\centering
\includegraphics[width=0.7\textwidth]{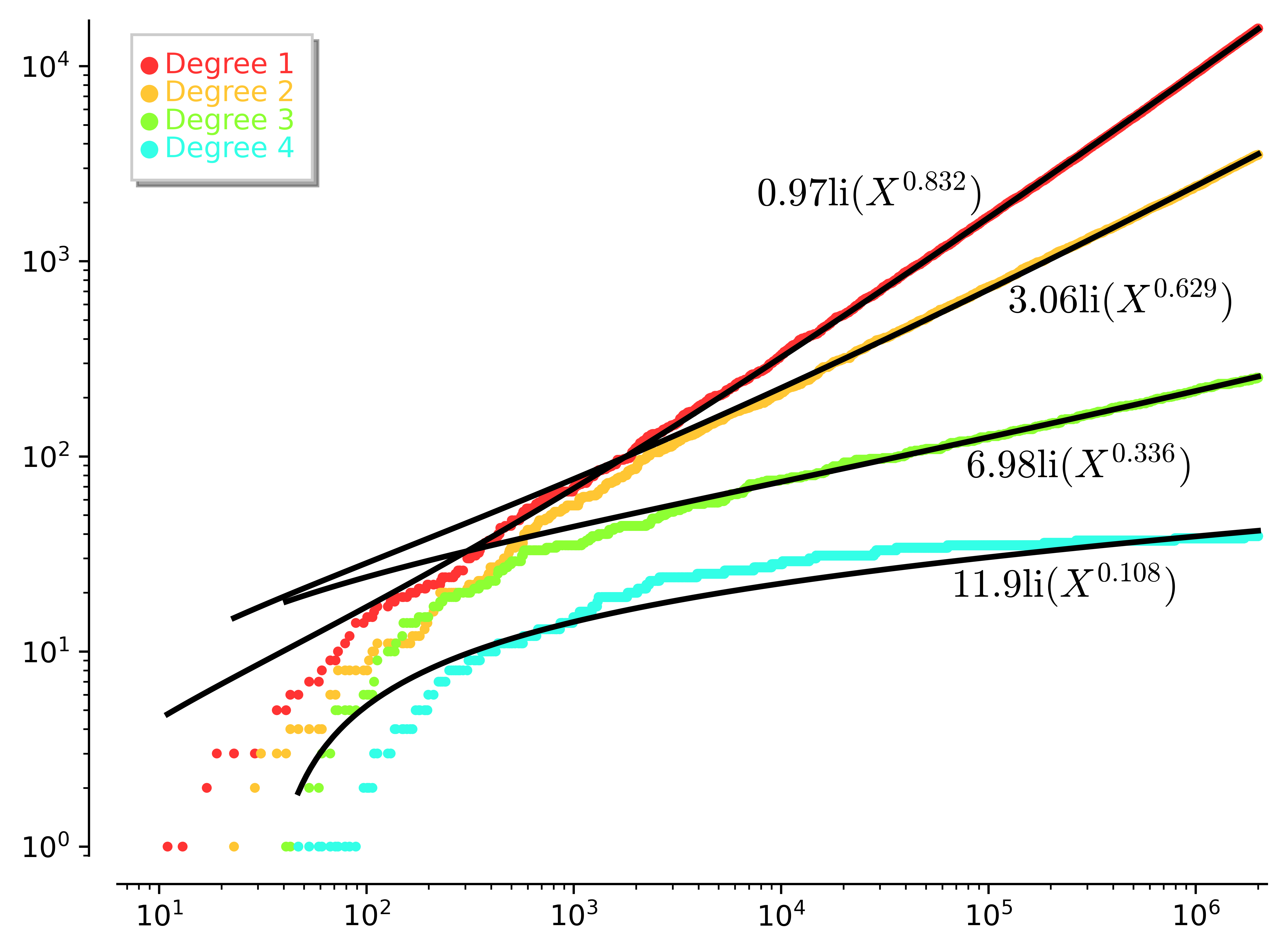}
\begin{figurecap}\label{deg1234_counts_best_fit_fig}
Number of forms with prime level less than $X$ by degree, with least-squares fits to the log-log data.
\end{figurecap}
\vspace{-\baselineskip}
\end{figure}
For degree $1$, our best fit values $a \approx 0.97$ and $b \approx 0.832$ are in agreement with the elliptic curve database \cite{bgr}, which, for $X$ up to $2\cdot 10^9$, finds that $a \approx 0.97$ and $b \approx 0.833$.  This agrees very well with the Brumer--McGuinness--Watkins heuristic (\ref{bmw_conj}).
For $d = 2, 3, 4$, the best fit exponents are approximately $0.629$, $0.336$, $0.108$.

A least-squares fit of $y = a\text{li}(x^b)$ to the data $(X,\cC_d'(X))$ yields similar values; the exponents are $0.841$, $0.622$, $0.330$, $0.107$ for $d = 1, 2, 3, 4$,
respectively.


When $1 \leq d \leq 4$, \cref{conj1} is consistent with our prime level data.  In \cref{deg1234_counts_best_fit_fig}, the growth rate $O(X^{1-d/6})$ appears to be an upper bound.  
In this range the best fit has a notably lower exponent for $d=3, 4$, but there may exist logarithmic factors in the main asymptotics for $d \ge 2$.  For example, for $d=2$, the Mestre obstruction to rationality of genus 2 curves
(see \cref{sec:HMS}) may introduce a logarithmic factor in the denominator. 
Note that in this range, $\log X > X^{\frac 16}$, so it is difficult to distinguish
between logarithmic factors and small powers of $X$.

\cref{deg_count_table} shows the number of newform orbits of degree at most $6$ with prime level between $1$ and $10^4$, between $10^4$ and $10^6$, and between $10^6$ and $2\cdot 10^6$.  Note that the first data column counts prime level forms which were already in the LMFDB.

\begin{table}[H]
\begin{tabular}{|c|r|r|r|r|}
\firsthline
\multirow{2}{*}{Degree} & \multicolumn{3}{c|}{Level range} & \multirow{2}{*}{Total} \\ [0.05ex]
\cline{2-4}
  & \multicolumn{1}{c|}{$1 \text{ -- } 10^4$} & \multicolumn{1}{c|}{$10^4 \text{ -- } 10^6$} & \multicolumn{1}{c|}{$10^6 \text{ -- } 2\cdot 10^6$} & \rule{0pt}{1em} \\ [0.05ex]
\hline
$1$ & $329$ & $8843$ & $6406$ & \rule{0pt}{1em} $15578$ \\ [0.05ex]
$2$ & $212$ & $2200$ & $1096$ & $3508$ \\ [0.05ex]
$3$ & $76$ & $142$ & $35$ & $253$ \\ [0.05ex]
$4$ & $28$ & $10$ & $1$ & $39$ \\ [0.05ex]
$5$ & $20$ & $2$ & ~ & $22$ \\ [0.05ex]
$6$ & $11$ & $1$ & ~ & $12$ \\ [0.05ex]
\hline
\end{tabular}
\begin{tablecap}\label{deg_count_table}
Number of prime-level newform orbits by degree and level. Blank entries are $0$.
\end{tablecap}
\vspace{-\baselineskip}
\end{table}

Many of the forms of degree $5$ and $6$ in this dataset have very small levels. For instance, $12$ of the $22$ degree $5$ forms are the largest-degree forms in their Atkin--Lehner eigenspaces, and similarly for $7$ of the $12$ degree $6$ forms. Only $5$ of the degree $5$ forms and $3$ of the degree $6$ forms have levels greater than $1000$.

Given this paucity of data, we refrain from any quantitative analysis of these forms, but remark that the counts for $d = 5,6$ appear to be consistent with \cref{conj1}.


\commentout{
\begin{wraptable}{r}{0.5\linewidth}
\vspace{-\baselineskip}
\centering
\begin{tabular}{|r||r|r|}
\firsthline
Level & $S_2^+(p)$ & $S_2^-(p)$ \rule{0pt}{1em} \\ [0.05ex]
\hline
$607$ & $5+7+7$ & $31$ \\ [0.05ex]
$911$ & $9+14$ & $53$ \\ [0.05ex]
$1223$ & $34$ & $9+59$ \\ [0.05ex]
$1249$ & $7+37$ & $59$ \\ [0.05ex]
$4751$ & $153$ & $18+225$ \\ [0.05ex]
\hline
\end{tabular}
\begin{tablecap} \label{table:intermediate_forms}
Atkin--Lehner eigenspaces of prime level at most $10^6$ with multiple orbits of size $\ge 7$
\end{tablecap}
\end{wraptable}
}

\cref{table:intermediate_forms} gives the decomposition type, i.e., sizes of Galois orbits, of all prime levels up to 1 million which have two or more newform orbits of degree at least $7$ in the same Atkin--Lehner eigenspace.  Here $S_2^{\pm}(p)$ denotes the subspace of $S_2(p)$ with Atkin--Lehner eigenvalue $\pm 1$ at $p$.

\begin{table}[H]
\begin{tabular}{|r|r|r|}
\firsthline
Level & $S_2^+(p)$ & $S_2^-(p)$ \rule{0pt}{1em} \\ [0.05ex]
\hline
\rule{0pt}{1em}
$607$ & $5+7+7$ & $31$ \\ [0.05ex]
$911$ & $9+14$ & $53$ \\ [0.05ex]
$1223$ & $34$ & $9+59$ \\ [0.05ex]
$1249$ & $7+37$ & $59$ \\ [0.05ex]
$4751$ & $153$ & $18+225$ \\ [0.05ex]
\hline
\end{tabular}
\begin{tablecap} \label{table:intermediate_forms}
Atkin--Lehner eigenspaces 
with multiple orbits of size at least $7$
\end{tablecap}
\vspace{-\baselineskip}
\end{table}

\cref{table:intermediate_forms} shows that there are only $6$ newforms orbits of prime level less than $10^6$ with degree $7$ or more that are not the unique largest in their Atkin--Lehner eigenspaces ($2$ of which are tied for the largest), with degrees $7$, $7$, $7$, $9$, $9$, and $18$. This data for $d \geq 7$ appears to be consistent with \cref{conj1}.

Moreover, \cref{table:intermediate_forms} shows that, for each prime level between $4751$ and $10^6$, the newforms in each Atkin--Lehner eigenspace consist of a single large Galois orbit together with orbits of size $\le 6$.
This prompts us to ask the following:
\begin{question} \label{question:lmfdb_degd}
  Is there a prime $p > 4751$ and a sign $\pm$ such that $S_2^\pm(p)$ contains two or more newform orbits each of degree $7$ or more?
\end{question}

This question can be viewed as a uniform version of \cref{conj1} in prime level. It seems plausible to us that in fact \cref{table:intermediate_forms} is a complete list of all ``mid-sized forms'' of prime level.

\subsection{Data counts by quadratic field}\label{sec:data_disc}


In this section we investigate \cref{conj2} and \cref{heuristic:ratpts} empirically. \cref{table_deg2} and \cref{disccount2_fig} present the relevant contents of our dataset.

\begin{table}[H]
\begin{tabular}{|c|r|r|r|r|}
\firsthline
\multirow{2}{*}{Disc} & \multicolumn{3}{c|}{Level range} & \multirow{2}{*}{Total} \\ [0.05ex]
\cline{2-4}
  & \multicolumn{1}{c|}{$1 \text{ -- } 10^4$} & \multicolumn{1}{c|}{$10^4 \text{ -- } 10^6$} & \multicolumn{1}{c|}{$10^6 \text{ -- } 2\cdot 10^6$} & \rule{0pt}{1em} \\ 
\hline
  $5$  & $158$ & $1900$ & $986$ & \rule{0pt}{1em} $3044$ \\ [0.05ex]
  $8$  & $37$  & $242$  & $100$ & $379$  \\ [0.05ex]
  $12$ & $1$   & $14$   & $3$   & $18$   \\ [0.05ex]
  $13$ & $13$  & $40$   & $6$   & $59$   \\ [0.05ex]
  $17$ & ~   & $1$    & ~   & $1$    \\ [0.05ex]
  \hline
\end{tabular}
\begin{tablecap}\label{table_deg2}
  Number of prime-level degree $2$ newform orbits by discriminant and level, for discriminants $D$ such that $Y_{-}(D)$ is rational. Blank entries are $0$.
\end{tablecap}
\vspace{-\baselineskip}
\end{table}

\begin{figure}[t]
  \centering
  \includegraphics[width=\textwidth]{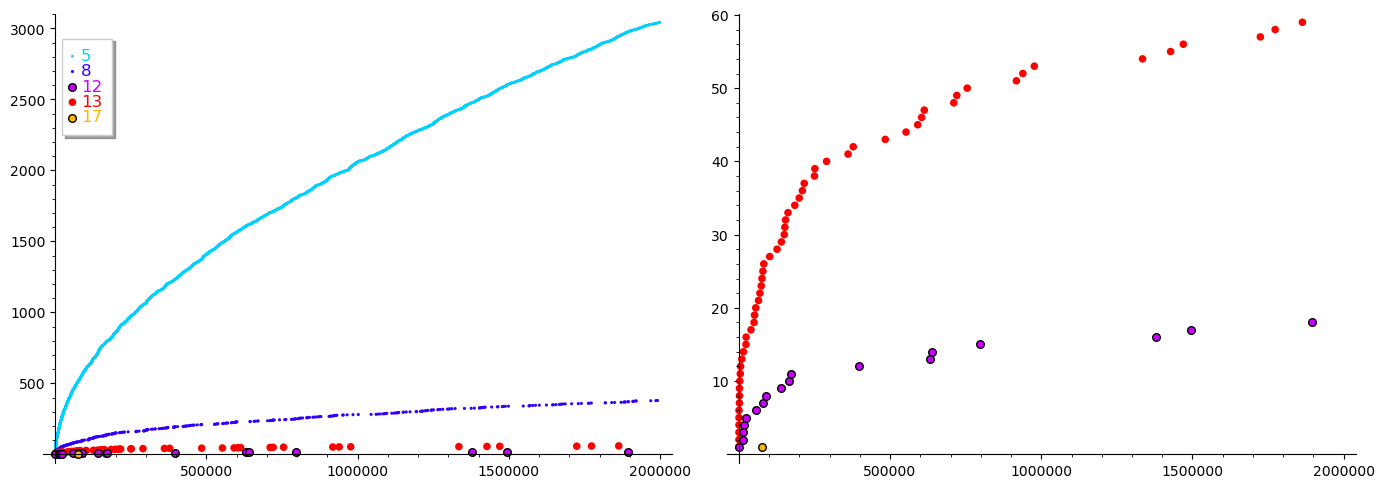}
\vspace{-0.55cm}
\begin{figurecap}\label{disccount2_fig}
Counts of degree 2 forms by discriminant $D$ for rational surfaces $Y_{-}(D)$. The plot on the right excludes discriminants $5$ and $8$.
\end{figurecap}
\vspace{-\baselineskip}
\end{figure}

The growth rate of the counts plotted in \cref{disccount2_fig} appears to be the largest for $D=5$, and $D=8$ appears to be the next largest.  This is consistent with \cref{conj2}.

Let $\cC_{d,D}'(X)$ denote the number of newform orbits of degree $d$, discriminant $D$, and prime level at most $X$. For each of $D = 5$, $8$, $12$, and $13$, we compute least-squares fits to the following four sets of data points:
\begin{enumerate}
\item $\left\{ (\log X, \log \cC_{d,D}'(X)) \,:\, 1 < X < 2\cdot 10^6, X\text{ prime}, \cC_{d,D}'(X) \geq 1 \right\}$
\item $\left\{ (\log X, \log \cC_{d,D}'(X)) \,:\, 10^4 < X < 2\cdot 10^6, X\text{ prime}, \cC_{d,D}'(X) \geq 1 \right\}$
\item $\left\{ (X, \cC_{d,D}'(X)) \,:\, 1 < X < 2\cdot 10^6, X\text{ prime}, \cC_{d,D}'(X) \geq 1 \right\}$
\item $\left\{ (X, \cC_{d,D}'(X)) \,:\, 10^4 < X < 2\cdot 10^6, X\text{ prime}, \cC_{d,D}'(X) \geq 1 \right\}$
\end{enumerate}

In the first two cases we fit functions of the form $y = \log(a\mathrm{li}(\exp(x)^b)))$, and in the last two $y = a\mathrm{li}(x^b)$. The best-fit exponents $b$ we obtain vary depending on our choice of model and range of $X$ values. We present these best-fit values of $b$ in \cref{table_deg2_fits}.

\begin{table}[H]
{\centering
\begin{tabular}{|c|r|c|c|c|c|}
\firsthline
\multirow{2}{*}{Data} & \multicolumn{1}{c|}{\multirow{2}{*}{$X$ range}} & \multicolumn{4}{c|}{Best-fit exponents} \\ [0.05ex]
\cline{3-6}
  & & \multicolumn{1}{c|}{$5$} & \multicolumn{1}{c|}{$8$} & \multicolumn{1}{c|}{$12$} & \multicolumn{1}{c|}{$13$} \\ 
\hline
  $(\log X, \log \cC_{d,D}'(X))$ & \rule{0pt}{1em} $1$ -- $2\cdot 10^6$ & $0.73$ & $0.61$ & $0.39$ & $0.42$ \\ [0.05ex]
  $(\log X, \log \cC_{d,D}'(X))$ & $10^4$ -- $2\cdot 10^6$ & $0.64$ & $0.54$ & $0.39$ & $0.38$ \\ [0.05ex]
  $(X, \cC_{d,D}'(X))$ & $1$ -- $2\cdot 10^6$ & $0.67$ & $0.56$ & $0.37$ & $0.37$ \\ [0.05ex]
  $(X, \cC_{d,D}'(X))$ & $10^4$ -- $2\cdot 10^6$ & $0.65$ & $0.53$ & $0.37$ & $0.35$ \\ [0.05ex]
  \hline
\end{tabular}
\begin{tablecap}\label{table_deg2_fits}
  Best-fit values of $b$ when fitting functions of the form $y = \log(a\mathrm{li}(\exp(x)^b)))$ or $y = a\mathrm{li}(x^b)$ as appropriate to data of counts of degree $2$ newform orbits with prime level and prescribed discriminant
\end{tablecap}
\vspace{-\baselineskip}
}
\end{table}
%

The best-fit exponents we obtain are all higher than the lower bounds proposed in \cref{heuristic:ratpts}, in many cases substantially, except for discriminant $12$, where the value is slightly lower than the $0.4$ appearing in \cref{heuristic:ratpts}. There is only one form of discriminant $17$ and prime level less than $2\cdot 10^6$, at level $75653$.

%
The data presented in \cref{table_deg2}, \cref{disccount2_fig}, and \cref{table_deg2_fits}, as well as the heuristics from \cref{heuristic:ratpts}, support \cref{conj2}.  Namely, the suggested lower bounds for counts by quadratic rationality field are largest for $\Q(\sqrt 5)$.  Since the heuristics do not rely on a restriction to prime level, one is led to ask:

\begin{question} 
Do 100\% of quadratic twist classes (ordered by minimal level) of weight $2$ degree $2$ non-CM newforms have rationality field $\Q(\sqrt 5)$?
\end{question}

There is an arithmetic reason to expect a relative scarcity of certain quadratic fields in prime level compared to the lower bounds for arbitrary levels suggested in \cref{heuristic:ratpts}.  Namely, if $C$ is a genus 2 curve with RM $D$, then we typically expect odd primes dividing $I_{10}^{\min}$ to divide the conductor $N_C$.  Based on the factorizations of $I_{10}$'s in \cref{tab:I10s}, we expect $I_{10}^{\min}$ to be a 2-power times a prime power very infrequently for $D=12,13,17$.  
Indeed, the LMFDB \cite{lmfdb} lists $5485$, $3948$ $2189$, $1230$, and $1643$ forms in all levels $N \le 10000$ for $D=5$, $8$, $12$, $13$, and $17$, respectively.  Restricting to squarefree level, these numbers are $1820$, $1124$, $445$, $319$, and $461$.

\subsection{Data counts for cubic fields}\label{sec:cubic}
While we have not attempted to carry out the approach outlined in \cref{sec:modabvar} to estimate counts of degree 3 forms with a given cubic rationality field $K$, the data, though more limited, behaves similarly as in the degree 2 case. \cref{table_deg3} and \cref{disccount3_fig} present counts of all prime level degree $3$ newform orbits by their Hecke field discriminant. These discriminants are sufficient to specify the Hecke field, in the sense that if $f$ and $g$ are degree $3$ forms of prime level less than $2\cdot 10^6$ and $\mathrm{Disc}(K_f) = \mathrm{Disc}(K_g)$, then $K_f = K_g$.

\begin{table}[H]
\begin{tabular}{|c|r|r|r|r|}
\firsthline
\multirow{2}{*}{Disc} & \multicolumn{3}{c|}{Level range} & \multirow{2}{*}{Total} \\ [0.05ex]
\cline{2-4}
  & \multicolumn{1}{c|}{$1 \text{ -- } 10^4$} & \multicolumn{1}{c|}{$10^4 \text{ -- } 10^6$} & \multicolumn{1}{c|}{$10^6 \text{ -- } 2\cdot 10^6$} & \rule{0pt}{1em} \\ 
  \hline
  $49$  & $34$ & $90$ & $30$ & \rule{0pt}{1em} $154$ \\ [0.05ex]
  $81$  & $3$  & $13$ & ~    & $16$  \\ [0.05ex]
  $148$ & $12$ & $6$  & ~    & $18$  \\ [0.05ex]
  $169$ & $2$  & $6$  & $3$  & $11$  \\ [0.05ex]
  $229$ & $8$  & $20$ & $1$  & $29$  \\ [0.05ex]
  $257$ & $9$  & $6$  & $1$  & $16$  \\ [0.05ex]
  $321$ & $2$  & $1$  & ~    & $3$   \\ [0.05ex]
  $404$ & $2$  & ~    & ~    & $2$   \\ [0.05ex]
  $469$ & $1$  & ~    & ~    & $1$   \\ [0.05ex]
  $473$ & $2$  & ~    & ~    & $2$   \\ [0.05ex]
  $621$ & $1$  & ~    & ~    & $1$   \\ [0.05ex]
  \hline
\end{tabular}
\begin{tablecap}\label{table_deg3}
  Number of prime-level degree $3$ newform orbits by discriminant and level. Blank entries are $0$.
\end{tablecap}
\vspace{-\baselineskip}
\end{table}

\begin{figure}[H]
  \centering
  \includegraphics[width=\textwidth]{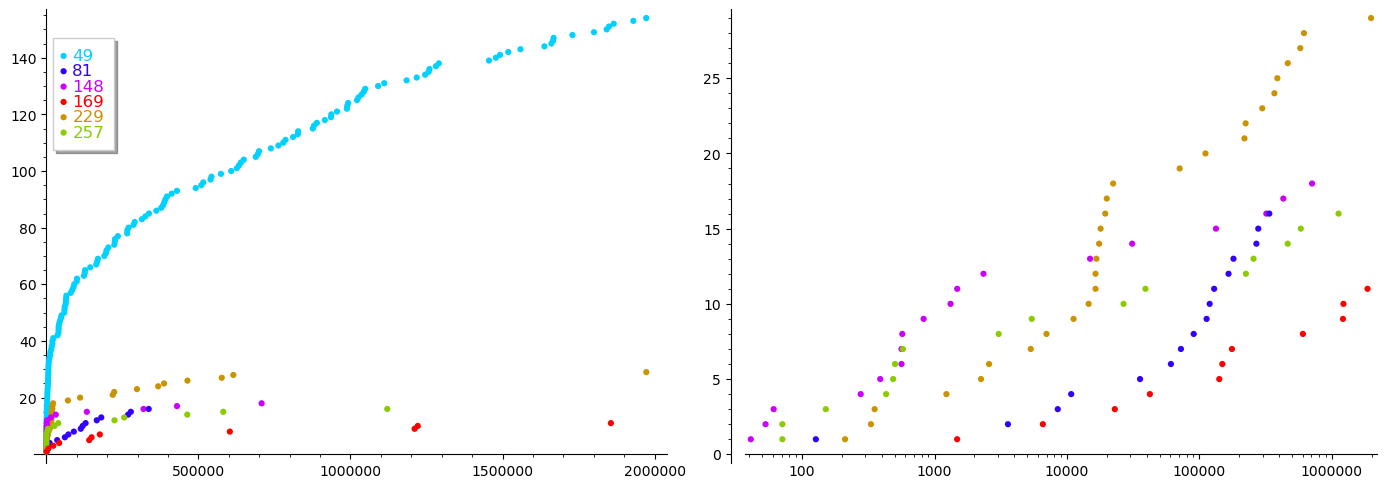}
\vspace{-0.55cm}
\begin{figurecap}\label{disccount3_fig}
Counts of degree 3 forms by discriminant. The plot on the right excludes discriminant $49$ and is on a $\log$ scale. Not shown are the $9$ forms with discriminant $321$, $404$, $469$, $473$, or $621$.
\end{figurecap}
\vspace{-\baselineskip}
\end{figure}
%

Like in \cref{sec:data_disc}, we fit functions of the form either $y = \log(a\mathrm{li}(\exp(x)^b)))$ or $y = a\mathrm{li}(x^b)$ as appropriate to data points of the form either $(\log X, \log \cC_{d,D}'(X))$ or $(X, \cC_{d,D}'(X))$, for prime $X$ between either $1$ and $2\cdot 10^6$ or $10^4$ and $2\cdot 10^6$. The best-fit exponents we obtain in each of these four cases, for $D = 49, 81, 148, 169, 229$, and $257$, are shown in \cref{table_deg3_fits}.

\begin{table}[H]
\begin{tabular}{|c|r|c|c|c|c|c|c|}
  \firsthline
  \multirow{2}{*}{Data} & \multicolumn{1}{c|}{\multirow{2}{*}{$X$ range}} & \multicolumn{6}{c|}{Best-fit exponents by $D$} \\ [0.05ex]
  \cline{3-8}
  & & \multicolumn{1}{c|}{$49$} & \multicolumn{1}{c|}{$81$} & \multicolumn{1}{c|}{$148$} & \multicolumn{1}{c|}{$169$} & \multicolumn{1}{c|}{$229$} & \multicolumn{1}{c|}{$257$} \\ 
  \hline
  $(\log X, \log \cC_{d,D}'(X))$ & \rule{0pt}{1em} $1$ -- $2\cdot 10^6$ & $0.42$ & $0.34$ & $0.12$ & $0.39$ & $0.24$ & $0.18$ \\ [0.05ex]
  $(\log X, \log \cC_{d,D}'(X))$ & $10^4$ -- $2\cdot 10^6$ & $0.42$ & $0.32$ & $0.11$ & $0.38$ & $0.20$ & $0.19$ \\ [0.05ex]
  $(X, \cC_{d,D}'(X))$ & $1$ -- $2\cdot 10^6$ & $0.43$ & $0.23$ & $0.11$ & $0.37$ & $0.19$ & $0.19$ \\ [0.05ex]
  $(X, \cC_{d,D}'(X))$ & $10^4$ -- $2\cdot 10^6$ & $0.43$ & $0.22$ & $0.11$ & $0.37$ & $0.18$ & $0.19$ \\ [0.05ex]
  \hline
\end{tabular}
\begin{tablecap}\label{table_deg3_fits}
  Best-fit values of $b$ when fitting functions of the form $y = \log(a\mathrm{li}(\exp(x)^b)))$ or $y = a\mathrm{li}(x^b)$ as appropriate to data of counts of degree $3$ newform orbits with prime level and prescribed discriminant
\end{tablecap}
\vspace{-\baselineskip}
\end{table}

Analogous to the degree $2$ case, it is natural to ask:

\begin{question} \label{qn:deg3}
Among squarefree levels $N \to \infty$, do 100\% of degree $3$ newforms in $S_2(N)$ have rationality field $\Q(\zeta_7)^+$?
\end{question}

\bibliographystyle{plain}
\bibliography{datapaperbib}{}

\end{document}